\newcommand{\T}{\ensuremath{\mathrm{T}}}
\newcommand{\E}{\ensuremath{\mathrm{e}}}
\newcommand{\I}{\ensuremath{\mathrm{i}}}
\newcommand{\D}{\ensuremath{\,\mathrm{d}}}
\newcommand{\mat}[1]{\ensuremath{\mathbf{#1}}}
\renewcommand{\vec}[1]{\ensuremath{\mathbf{#1}}}
\newtheorem{theorem}{Theorem}
\newtheorem{lemma}{Lemma}
\newtheorem{definition}{Definition}
\newtheorem{remark}{Remark}
\title{Multivariate Anisotropic Interpolation\\on the Torus}
\author{Ronny Bergmann\thanks{Corresponding author}\({\ \!}^{\ ,}\)\thanks{Department of Mathematics, University of Kaiserslautern, Paul-Ehrlich-Stra\ss e 31, D-67663 Kaiserslautern, Germany,  \href{mailto:bergmann@mathematik.uni-kl.de}{bergmann@mathematik.uni-kl.de}}%
\and %
J{\"u}rgen Prestin%
\thanks{Institute of Mathematics, University of L{\"u}beck, Ratzeburger Allee 160, D-23562 L{\"u}beck, Germany, \newline\href{mailto:prestin@math.uni-luebeck.de}{prestin@math.uni-luebeck.de}}
}
\date{January 17, 2014}
\begin{document}
\maketitle
\begin{abstract}
\noindent\textbf{Abstract.} We investigate the error of periodic interpolation, when sampling a function on an arbitrary pattern on the torus. We generalize the periodic Strang-Fix conditions to an anisotropic setting and provide an upper bound for the error of interpolation.  These conditions and the investigation of the error especially take different levels of smoothness along certain directions into account.	
\end{abstract}

\section{Introduction} 
	\label{sec:Introduction}
	Approximation by equidistant translates of a periodic function was first investigated in the univariate case \cite{Delvos:1987,Locher:1981}. The multivariate case was developed in \cite{Sprengel:1997,Sprengel:1998,Sprengel:2000}, where  the introduction of the periodic Strang-Fix conditions enabled a unified way to the error estimates~\cite{Poe95,Poplau:1997}.

	Recently, many approaches such as contourlets~\cite{DoVetterli2005}, curvelets~\cite{Fadili2007} or shearlets~\cite{GuoLabate:2010}, analyze and decompose multivariate data by focusing on certain anisotropic features. A more general approach are wavelets with composite dilations~\cite{Guo:2006,Krishtal:2011}, which inherit an MRA structure similar to the classical wavelets. For periodic functions the multivariate periodic wavelet analysis~\cite{Bergmann:2013,GohLeeTeo:1999,LangemannPrestin:2010,Skopina1997,Skopina2000} is a periodic approach to such an anisotropic decomposition. The pattern \( \mathcal P(\mat{M}) \) as a basic ingredient to these scaling and wavelet functions models equidistant points with preference of direction, i.e., fixing one direction \( \vec{v}\in\mathbb R^d, \|\vec{v}\|=1 \), we obtain equidistant points along this direction in the pattern \( \mathcal P(\mat{M}) \), though other directions might have other point distances.

	This paper presents the interpolation on such patterns \( \mathcal P(\mat{M}) \), where \( \mat{M}\in\mathbb Z^{d\times d} \), \( d\in\mathbb N \), is a regular integer matrix. In order to derive an upper bound for the interpolation error, we introduce function spaces \( A_{\mat{M},q}^{\alpha} \), where each function is of different directional smoothness due to decay properties of the Fourier coefficients imposed. The periodic Strang-Fix conditions can be generalized to this anisotropic setting, characterizing and quantifying the reproduction capabilities of a fundamental interpolant with respect to a certain set of trigonometric polynomials. Such a fundamental interpolant can then be used for approximation, where the error can be bounded for the functions having certain directional smoothness, i.e., the space \( A_{\mat{M},q}^{\alpha} \).

	The rest of the paper is organized as follows: In Section~\ref{sec:pre} we introduce the basic preliminary notations of the pattern \( \mathcal P(\mat{M}) \), the corresponding discrete Fourier transform \( \mathcal F(\mat{M}) \), and the spaces \( A_{\mat{M},q}^\alpha \). Section \ref{sec:IPSFC} is devoted to the interpolation problem on the pattern \( \mathcal P(\mat{M}) \) and the ellipsoidal periodic Strang-Fix conditions, which generalize the periodic Strang-Fix conditions to an anisotropic setting. For this interpolation, we derive an upper bound for interpolation with respect to \( A_{\mat{M},q}^\alpha \) in Section~\ref{sec:IPErr}. Finally, in Section~\ref{sec:Example} we provide an example that the ellipsoidal Strang-Fix conditions are fulfilled by certain periodized 3-directional box splines and their higher dimensional analogues.
%
%
%
\section{Preliminaries}\label{sec:pre} 
	%
	%
	\subsection{Patterns} 
		Let \( d\in\mathbb N \). For a regular integral matrix \( \mat{M}\in\mathbb Z^{d\times d} \) and two vectors \( \vec{h},\vec{k}\in\mathbb Z^d \) we write \( \vec{h}\equiv\vec{k}\bmod\mat{M} \) if there exists a vector \( \vec{z}\in\mathbb Z^d \) such that \( \vec{h} = \vec{k}+\mat{M}\vec{z} \). The set of congruence classes
		\[
		[\vec{h}]_\mat{M} := \bigl\{\vec{k}\in\mathbb Z^d\,;\, \vec{k}\equiv \vec{h} \bmod\mat{M}\bigr\},\quad\vec{h}\in\mathbb Z^d,
		\] forms a partition of \( \mathbb Z^d \) and using the addition \( [\vec{h}]_\mat{M}+[\vec{k}]_\mat{M} := [\vec{h}+\vec{k}]_\mat{M} \), we obtain the generating group \( \bigl(\mathcal G(\mat{M}), +\bigr) \), where the generating set \( \mathcal G(\mat{M}) \) is any set of congruence class representatives. If we apply the congruence with respect to the unit matrix \[
		{\mathbf E}_d := \bigl(\delta_{i,j}\bigr)_{i,j=1}^d
		\in\mathbb R^{d\times d}
		,
		\quad\text{where}\
		\delta_{i,j} := \begin{cases}
			1&\mbox { if } i=j,\\
			0&\mbox{ else, }
		\end{cases}
		\] denotes the Kronecker delta, to the lattice \( \Lambda_\mat{M} := \mat{M}^{-1}\mathbb Z^d \subset \mathbb Q^d\), we also get congruence classes. Let further \( \vec{e}_j := \bigl(\delta_{i,j})_{i=1}^d \) denote the \( j \)th unit vector. We obtain the pattern group \( \bigl(\mathcal P(\mat{M}),+) \) on the corresponding congruence classes \( [\vec{y}]_{{\mathbf E}_d} \), \( \vec{y}\in\Lambda_\mat{M} \), where the pattern \( \mathcal P(\mat{M}) \) is again any set of congruence class representants of the congruence classes on the lattice \( \Lambda_\mat{M} \). For any pattern \( \mathcal P(\mat{M}) \) we obtain a generating set by \( \mathcal G(\mat{M}) = \mat{M}\mathcal P(\mat{M}) \). Using a geometrical argument \cite[Lemma II.7]{dBHS:1993}, we get \( |\mathcal P(\mat{M})| = |\mathcal G(\mat{M})| = |\det\mat{M}| =: m \). A special pattern \( \mathcal P_{\text{S}}(\mat{M}) \) and its corresponding generating set \( \mathcal G_{\text{S}}(\mat{M}) \) are given by
		\[
			\mathcal P_{\text{S}}(\mat{M}) := \bigl[-\tfrac{1}{2},\tfrac{1}{2}\bigr)^d\cap\Lambda_\mat{M}
			\quad\text{ and }\quad
			\mathcal G_{\text{S}}(\mat{M}) := \mat{M}\bigl[-\tfrac{1}{2},\tfrac{1}{2}\bigr)^d\cap\mathbb Z^d
			\text{.}
		\]
		We will apply the usual addition, when performing an addition on the set of representatives, i.e., for \( \vec{x},\vec{y}\in\mathcal P(\mat{M}) \) the expression \( \vec{x}+\vec{y} \) is an abbreviation for choosing the unique element of \( [\vec{x}+\vec{y}]_{{\mathbf E}_d}\cap\mathcal P(\mat{M}) \). In fact, for any discrete group \( \mathcal G = (S,+\bmod1) \) with respect to addition modulo 1, there exists a matrix \( \mat{M} \), whose pattern \( \mathcal P(\mat{M}) \) coincides with the set \( S \) \cite[Theorem 1.8]{Bergmann:2013b}.
		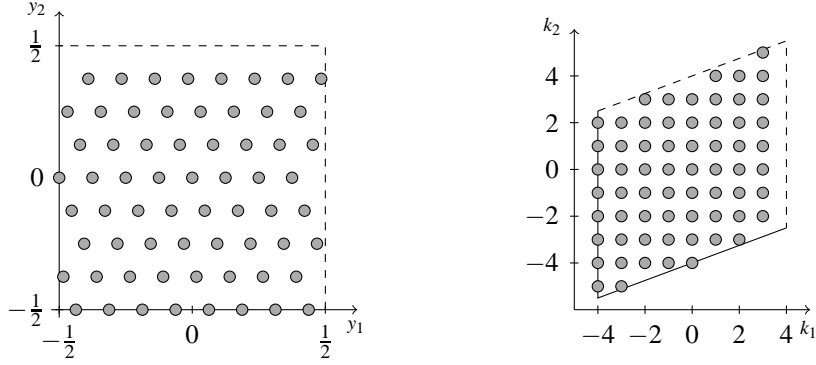
\begin{figure}[tbp]
		\centering
		\tikzstyle{every node}=[font=\footnotesize, label distance = 0pt]
		\begin{tikzpicture}[scale=3.5]
			\draw[thin, dashed] (.5,-.5) -- (.5,.5) -- (-.5,.5);
			\draw[->] (-.53cm,-.5cm) -- (.62cm,-.5cm) node[below] {$^{y_1}$};
			\draw[->] (-.5cm,-.53cm) -- (-.5cm,.63cm) node[left] {$^{y_2}$};
			\draw (0,-.5cm +.015cm) -- (0,-.5cm -.015cm) node[anchor=north] {$0$};
			\draw (-.5cm+.015cm,0) -- (-.5cm-.015cm,0) node[anchor=east] {$0$};
			\draw (-.5cm,-.5cm +.015cm) -- (-.5cm,-.5cm-.015cm) node[anchor=north] {$-\frac{1}{2}$};
			\draw (-.5cm+.015cm,-.5cm) -- (-.5cm-.015cm,-.5cm) node[anchor=east] {$-\frac{1}{2}$};
			\draw (.5cm,-.5cm+.015cm) -- (.5cm,-.5cm-.015cm) node[anchor=north] {$\frac{1}{2}$};
			\draw (-.5cm+.015cm,.5cm) -- (-.5cm-.015cm,.5cm) node[anchor=east] {$\frac{1}{2}$};
			\foreach \x/\y in {-0.5/0.,-0.484375/-0.375,-0.46875/0.25,-0.453125/-0.125,-0.4375/-0.5,-0.421875/0.125,-0.40625/-0.25,-0.390625/0.375,-0.375/0.,-0.359375/-0.375,-0.34375/0.25,-0.328125/-0.125,-0.3125/-0.5,-0.296875/0.125,-0.28125/-0.25,-0.265625/0.375,-0.25/0.,-0.234375/-0.375,-0.21875/0.25,-0.203125/-0.125,-0.1875/-0.5,-0.171875/0.125,-0.15625/-0.25,-0.140625/0.375,-0.125/0.,-0.109375/-0.375,-0.09375/0.25,-0.078125/-0.125,-0.0625/-0.5,-0.046875/0.125,-0.03125/-0.25,-0.015625/0.375,0./0.,0.015625/-0.375,0.03125/0.25,0.046875/-0.125,0.0625/-0.5,0.078125/0.125,0.09375/-0.25,0.109375/0.375,0.125/0.,0.140625/-0.375,0.15625/0.25,0.171875/-0.125,0.1875/-0.5,0.203125/0.125,0.21875/-0.25,0.234375/0.375,0.25/0.,0.265625/-0.375,0.28125/0.25,0.296875/-0.125,0.3125/-0.5,0.328125/0.125,0.34375/-0.25,0.359375/0.375,0.375/0.,0.390625/-0.375,0.40625/0.25,0.421875/-0.125,0.4375/-0.5,0.453125/0.125,0.46875/-0.25,0.484375/0.375}
				\node[circle,inner sep=1.5pt,fill=black!33,draw=black] at (\x,\y) {};
			\end{tikzpicture}\hspace{4em}
			\begin{tikzpicture}[scale=0.31]
				%
					\draw[->] (-5,-6) -- (5,-6) node[below] {$^{k_1}$};
					\draw[->] (-5,-6) -- (-5,6) node[left] {$^{k_2}$};
					\foreach \x in {-4,-2,0,2,4}
						\draw (\x,-6cm +.18cm) -- (\x,-6cm -.18cm) node[anchor=north] {$\x$};
					\foreach \y in {-4,-2,0,2,4}
					\draw (-5cm+.18cm,\y) -- (-5cm-.18cm,\y) node[anchor=east] {$\y$};
					\draw[thin, dashed] (-4.,2.5) -- (4.,5.5) -- (4.,-2.5);
					\draw[thin] (-4.,2.5) -- (-4.,-5.5) -- (4.,-2.5);
					\foreach \x/\y in {-4./-5.,-4./-4.,-4./-3.,-4./-2.,-4./-1.,-4./0.,-4./1.,-4./2.,-3./-5.,-3./-4.,-3./-3.,-3./-2.,-3./-1.,-3./0.,-3./1.,-3./2.,-2./-4.,-2./-3.,-2./-2.,-2./-1.,-2./0.,-2./1.,-2./2.,-2./3.,-1./-4.,-1./-3.,-1./-2.,-1./-1.,-1./0.,-1./1.,-1./2.,-1./3.,0./-4.,0./-3.,0./-2.,0./-1.,0./0.,0./1.,0./2.,0./3.,1./-3.,1./-2.,1./-1.,1./0.,1./1.,1./2.,1./3.,1./4.,2./-3.,2./-2.,2./-1.,2./0.,2./1.,2./2.,2./3.,2./4.,3./-2.,3./-1.,3./0.,3./1.,3./2.,3./3.,3./4.,3./5.}
					\node[circle,inner sep=1.5pt,fill=black!33,draw=black] at (\x,\y) {};
					\draw[use as bounding box,draw=none] (-5,-6-2.5) rectangle (5,6);
			\end{tikzpicture}
			\vspace{.5\baselineskip}
			\caption[]{The pattern \(\mathcal P_{\text{S}}(\mathbf{M})\) (left) and the generating set \(\mathcal G_{\text{S}}(\mathbf{M}^\T)\) (right), where \(\mathbf{M} = \bigl(\begin{smallmatrix}8&3\\0&8\end{smallmatrix}\bigr)\).}
			\label{fig:Pat}
		\end{figure}
		Figure \ref{fig:Pat} gives an example of a pattern \(\mathcal P(\mathbf{M})\) of a matrix \(\mathbf{M}\) and a generating set \(\mathcal G(\mathbf{M}^\T)\), where the matrix is an upper triangular matrix. By scaling and shearing, the points of the pattern lie dense along a certain direction.
	
		The discrete Fourier transform is defined by applying the Fourier matrix
		\[
		\mathcal F(\mat{M}) := \frac{1}{\sqrt{m}}\Bigl(\E^{-2\pi\I\vec{h}^\T\vec{y}}\Bigr)_{\vec{h}\in\mathcal G(\mat{M}^\T),\,\vec{y}\in\mathcal P(\mat{M})}
		\]
		to a vector \( \vec{a} = \bigl(a_{\vec{y}}\bigr)_{\vec{y}\in\mathcal P(\mat{M})}\in\mathbb C^m \) having the same order of elements as the columns of \(\mathcal F(\mat{M}) \). We write for the Fourier transform \( \vec{\hat{a}} := \bigl(\hat a_{\vec{h}}\bigr)_{\vec{h}\in\mathcal G(\mathbf{M}^\T)} = \sqrt{m}\mathcal F(\mat{M})\vec{a} \), where the vector \( \vec{\hat a} \) is ordered in the same way as the rows of the Fourier matrix \( \mathcal F(\mat{M}) \). Further investigations of patterns and generating sets, especially concerning subpatterns and shift invariant spaces, can be found in \cite{LangemannPrestin:2010}, which is extended in \cite{Bergmann:2013} with respect to bases and certain orderings of the elements of both sets to obtain a fast Fourier transform.
	
		Finally, we denote by \( \lambda_1(\mat{M}),\ldots,\lambda_d(\mat{M}) \) the eigenvalues of \( \mat{M} \) including their multiplicities in increasing order, i.e., for \( i<j \) we get \( |\lambda_i(\mat{M})|\leq|\lambda_j(\mat{M})|\).
	
		For the rest of this paper, let \( |\lambda_d(\mat{M})| \geq 2 \). To emphasize this fact, we call a matrix \( \mat{M} \) that fulfills \( |\lambda_d(\mat{M})|\geq2 \) an expanding matrix.
	%
	%
	\subsection{Function Spaces} 
		For functions \( f:\mathbb T^d\to\mathbb C\) on the
		torus \( \mathbb T^d:=\mathbb R^d/2\pi\mathbb Z^d \) consider the Banach spaces
		\(\operatorname{L}_p(\mathbb T^d)\), \( 1\leq p \leq \infty\), with norm
		\[
			\|f\,|\,\operatorname{L}_p(\mathbb T^d)\|^{p} 
			:= \frac{1}{(2\pi)^d}\int_{\mathbb T^d} |f(\vec{x})|^{p}\,\D\vec{x}
		\]
		and the usual modification for \( p=\infty \), that \( \|f|\operatorname{L}_{\infty}(\mathbb T^d)\| = \displaystyle\operatorname{ess\ sup}_{\vec{x}\in\mathbb T^d} |f(\vec{x})| \). Analogously for sequences
		\( \vec{c} := \{c_{\vec{z}}\}_{\vec{z}\in\mathcal X},\mathcal X \subseteq\mathbb Z^d \), the Banach spaces \( \ell_q(\mathcal X)\), \(1\leq q\leq\infty \), are defined with norm
		\[
		\|\vec{c}\,|\,\ell_q(\mathcal X)\|^q:=\sum_{\vec{k}\in\mathcal X}\bigl|c_{\vec{k}}\bigr|^q
		\text{,}
		\]
		again with the usual modification for \( q=\infty \). For \( f\in \operatorname{L}_1(\mathbb T^d) \) the Fourier coefficients are given by
		\[
			c_{\vec{k}}(f) := \frac{1}{(2\pi)^d}\int_{\mathbb T^d} f(\vec{x}) \E^{-\I\vec{k}^\T\vec{x}}\,\D\vec{x},\quad\vec{k}\in\mathbb Z^d\text{.}
		\]
		For \( \beta \geq 0 \), we define the ellipsoidal weight function \( \sigma^{\mat{M}}_{\beta} \), which was similarly introduced in \cite[Sec. 1.2]{Bergmann:2013b},
		\[
			\sigma^{\mat{M}}_{\beta}(\vec{k})
			:=
			\left(1+\|\mat{M}\|_2^2\|\mat{M}^{-\T}\vec{k}\|_2^2\right)^{\beta/2}
			,\quad\vec{k}\in\mathbb Z^d
			\text{,}
		\]
		to define for \( q \geq 1 \) the spaces
		\begin{equation*}
			A_{\mat{M},\,q}^{\beta}(\mathbb T^d) :=
			\left\{\left.
				f
				  \in \operatorname{L}_1(\mathbb T^d)
				\,
				\right|
				\bigl\|f\,\bigl|\,A_{\mat{M},q}^{\beta}\bigr\| < \infty
			\right\}\text{,}
		\end{equation*}
		where
		\begin{equation*}
			\bigl\|f\,\bigl|\,A_{\mat{M},q}^{\beta}\bigr\|
			:=
			\bigl\|
				\{\sigma_{\beta}^{\mat{M}}(\vec{k})
				c_{\vec{k}}(f)
				\}_{\vec{k}\in\mathbb Z^d}
			\,\bigr|\ell_q(\mathbb Z^d)
			\bigr\|\text{.}
		\end{equation*}
		A special case is given by \( A(\mathbb T^d) := A_{\mat{M},1}^0(\mathbb T^d) \), which is the Wiener algebra of all functions with absolutely convergent Fourier series. We see that \( \|\mat{M}^\T\|_2^2 = \lambda_d(\mat{M}^\T\mat{M}) > 1 \). For any diagonal matrix \( \mat{M} = \operatorname{diag}(N,\ldots,N) \), \( N\in\mathbb N \), the weight function simplifies to \( (1+\|\vec{k}\|_2^2)^{\beta/2} \) and these spaces resemble the spaces used in \cite{Sprengel:2000} to derive error bounds for interpolation by translates. Even more, if we fix \( \alpha\in\mathbb R \) and \( q\geq 1 \), due to the inequalities
		\begin{align*}
			\bigl(1+\|\mat{M}\|_2^2\|\mat{M}^{-\T}\vec{z}\|_2^2\bigr)^{\alpha/2}
			\leq
			\biggl(
			\frac{\lambda_d(\mat{M}^\T\mat{M})}{\lambda_1(\mat{M}^\T\mat{M})}
			\biggr)^{\alpha/2}
			\bigl(1+\|\vec{z}\|_2^2\bigr)^{\alpha/2}
		\end{align*}
		and
		\begin{align*}
			(1+\|\vec{z}\|_2^2)^{\alpha/2} &= (1+\|\mat{M}^\T\mat{M}^{-\T}\vec{z}\|_2^2)^{\alpha/2}
			\leq (1+\|\mat{M}\|_2^2\|\mat{M}^{-\T}\vec{z}\|_2^2)^{\alpha/2}\\
		\end{align*}
		we have, that all spaces \( A_{\mat{M},q}^\alpha \) of regular integer matrices \( \mat{M} \) are equal to \( A_{\mat{E}_d,q}^\alpha \), which is the same as \( A_{q}^\alpha \) in \cite{Sprengel:2000}. However, each of the different norms provides a different quantification of the functions \( f\in A_{q}^\alpha \). We keep the matrix \( \mat{M} \) in the notation of the space in order to distinguish the specific norm that we will use.

		For the weight \( \sigma^{\mat{M}}_\beta \) we finally have the following lemma.
		\begin{lemma}\label{lem:sigma-ineq}
			For a regular expanding matrix \(\mat{M}\in\mathbb Z^{d\times d}\), i.e., \( |\lambda_d(\mat{M})|\geq 2 \), and an ellipsoidal weight function \(\sigma_\beta^{\mat{M}}\), where \(\beta>0\) we have
			\begin{equation}\label{eq:weightsplit}
				\sigma_\beta^{\mat{M}}(\vec{k}+\mat{M}^\T\vec{z})
				\leq
				\|\mat{M}\|_2^{\beta}
				\sigma_\beta^{\mat{M}}(\vec{k})
				\sigma_\beta^{\mat{M}}(\vec{z}) \quad \text{ for }\vec{k},\vec{z}\in\mathbb Z^d
				\text{.}
			\end{equation}
		\end{lemma}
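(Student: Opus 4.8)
The plan is to peel the matrix off the weight and reduce the claim to a one-dimensional elementary inequality. First I would abbreviate $A := \|\mat{M}\|_2$ and use that a real matrix and its transpose have the same spectral norm, so $\|\mat{M}^\T\|_2 = A$. Since $\mat{M}^{-\T}\mat{M}^\T = \mat{E}_d$, the argument of the weight splits additively, $\mat{M}^{-\T}(\vec{k}+\mat{M}^\T\vec{z}) = \mat{M}^{-\T}\vec{k} + \vec{z}$, and the triangle inequality together with the bound $\|\vec{z}\|_2 = \|\mat{M}^\T\mat{M}^{-\T}\vec{z}\|_2 \leq A\,\|\mat{M}^{-\T}\vec{z}\|_2$ yields
\[
	\|\mat{M}^{-\T}(\vec{k}+\mat{M}^\T\vec{z})\|_2 \leq \|\mat{M}^{-\T}\vec{k}\|_2 + A\,\|\mat{M}^{-\T}\vec{z}\|_2 .
\]
Multiplying by $A$ and writing $u := A\,\|\mat{M}^{-\T}\vec{k}\|_2$ and $v := A\,\|\mat{M}^{-\T}\vec{z}\|_2$ this gives $A\,\|\mat{M}^{-\T}(\vec{k}+\mat{M}^\T\vec{z})\|_2 \leq u + Av$, hence $1 + A^2\|\mat{M}^{-\T}(\vec{k}+\mat{M}^\T\vec{z})\|_2^2 \leq 1 + (u+Av)^2$. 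As $\sigma_\beta^{\mat{M}}(\vec{k}) = (1+u^2)^{\beta/2}$, $\sigma_\beta^{\mat{M}}(\vec{z}) = (1+v^2)^{\beta/2}$, and $t\mapsto t^{\beta/2}$ is increasing for $\beta>0$, everything reduces to the scalar inequality $1 + (u+Av)^2 \leq A^2(1+u^2)(1+v^2)$ for $u,v\geq 0$.

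For that inequality I would expand both sides and rearrange their difference as
\[
	A^2(1+u^2)(1+v^2) - 1 - (u+Av)^2 = (A^2-1)(1+u^2) + (Auv-1)^2 - 1 ,
\]
which is at least $(A^2-1)-1 = A^2-2 \geq 0$ as soon as $A \geq \sqrt{2}$. This is the one point where the expanding hypothesis enters: the spectral norm is never smaller than the spectral radius, so $A = \|\mat{M}\|_2 \geq |\lambda_d(\mat{M})| \geq 2 > \sqrt{2}$. Combining this with the chain above gives $1 + A^2\|\mat{M}^{-\T}(\vec{k}+\mat{M}^\T\vec{z})\|_2^2 \leq A^2(1+u^2)(1+v^2)$, and raising to the power $\beta/2$ produces exactly \eqref{eq:weightsplit} with the constant $A^\beta = \|\mat{M}\|_2^\beta$.

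I do not expect a genuine obstacle. The two spots that need a little care are the bookkeeping of the constant — checking that the single extra factor $A$ coming from $\|\vec{z}\|_2 \leq A\,\|\mat{M}^{-\T}\vec{z}\|_2$ produces precisely $\|\mat{M}\|_2^\beta$ and nothing larger — and the observation that the scalar estimate really requires $\|\mat{M}\|_2 \geq \sqrt{2}$: for $1 < \|\mat{M}\|_2 < \sqrt{2}$ one can pick $u$ small and $v$ large with $Auv = 1$ to violate it, so the assumption $|\lambda_d(\mat{M})| \geq 2$ cannot simply be weakened to $|\lambda_d(\mat{M})| > 1$ for this argument.
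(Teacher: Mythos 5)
Your argument is correct, and it takes a genuinely different route at the decisive step. Both proofs share the outer frame (pull out \( \|\mat{M}\|_2^\beta \), apply the triangle inequality to \( \mat{M}^{-\T}\vec{k}+\vec{z} \), and use \( \|\vec{z}\|_2\leq\|\mat{M}\|_2\|\mat{M}^{-\T}\vec{z}\|_2 \)), but the paper then dominates the cross term \( 2\|\mat{M}^{-\T}\vec{k}\|_2\|\vec{z}\|_2 \) by \( \|\mat{M}\|_2^2\|\mat{M}^{-\T}\vec{k}\|_2^2\|\vec{z}\|_2^2 \), which hinges on the integrality of \( \vec{k} \) and \( \vec{z} \) through \( \|\mat{M}\|_2\|\mat{M}^{-\T}\vec{k}\|_2\geq1 \) and \( \|\vec{z}\|_2\geq1 \), and therefore must treat \( \vec{k}=\vec{0} \) or \( \vec{z}=\vec{0} \) separately. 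You instead reduce everything to the scalar inequality \( 1+(u+Av)^2\leq A^2(1+u^2)(1+v^2) \), whose rearrangement as \( (A^2-1)(1+u^2)+(Auv-1)^2-1\geq A^2-2 \) I have checked and which is valid for \emph{all} \( u,v\geq0 \). What this buys you: no case distinction, no use of integrality (the lemma then holds verbatim for real arguments \( \vec{k},\vec{z}\in\mathbb R^d \)), and the weaker hypothesis \( \|\mat{M}\|_2\geq\sqrt2 \) in place of \( \geq2 \); your closing observation that the scalar estimate genuinely fails for \( A<\sqrt2 \) (take \( Auv=1 \), \( u\to0 \)) correctly delimits the method — though note it does not by itself disprove the lemma for such matrices, since there \( u,v \) are constrained away from that regime by integrality. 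What the paper's version buys is a term-by-term domination that transparently matches the expanded product, at the cost of the extra hypotheses. Either proof could stand in the paper.
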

		\begin{proof}
			We have \( 2\leq\|\mat{M}\|_2 = \sqrt{\lambda_d(\mat{M}^\T\mat{M})} \). For \( \vec{z}=\vec{0} \) or \( \vec{k}=\vec{0} \) the assertion holds. For \( \vec{k},\vec{z}\neq\vec{0} \) we apply the triangle inequality and the submultiplicativity of the spectral norm to obtain
			\begin{align*}
				\sigma_\beta^{\mat{M}}(\vec{k}+\mat{M}^\T\vec{z})
				&=
					(1+\|\mat{M}\|_2^2\|\mat{M}^{-\T}(\vec{k}+\mat{M}^\T\vec{z})\|_2^2)^{\beta/2}\\
				&\leq
					\|\mat{M}\|_2^\beta(1+\|\mat{M}^{-\T}\vec{k}\|_2^2 + 2\|\mat{M}^{-\T}\vec{k}\|_2\|\vec{z}\|_2 + \|\vec{z}\|_2^2)^{\beta/2}\text{.}
			\end{align*}
				Using \( \|\mat{M}\|_2\|\mat{M}^{-\T}\vec{k}\|_2\geq 1 \), \( \|\vec{z}\|_2\geq 1 \) und \( \|\mat{M}\|_2\geq 2 \), we further get
			\begin{align*}
				\sigma_\beta^{\mat{M}}&(\vec{k}+\mat{M}^\T\vec{z})\\
				&\leq
				\|\mat{M}\|_2^\beta(1+\|\mat{M}^{-\T}\vec{k}\|_2^2 + \|\mat{M}\|_2^2\|\mat{M}^{-\T}\vec{k}\|_2^2\|\vec{z}\|_2^2 + \|\mat{M}^\T\mat{M}^{-\T}\vec{z}\|_2^2)^{\beta/2}\\
				&\leq
				\|\mat{M}\|_2^\beta
				(1+\|\mat{M}\|_2^2\|\mat{M}^{-\T}\vec{k}\|_2^2)^{\beta/2}
				(1+\|\mat{M}\|_2^2\|\mat{M}^{-\T}\vec{z}\|_2^2)^{\beta/2}\\
				&=
				\|\mat{M}\|_2^\beta
				\sigma_\beta^{\mat{M}}(\vec{k})
				\sigma_\beta^{\mat{M}}(\vec{z})\text{.}\tag*{\qed}
			\end{align*}
			\let\qed\relax
		\end{proof}
		\begin{remark}
			In the same way one would obtain
			\( 	\sigma_\beta^{\mat{M}}(\vec{k}+\mat{M}^\T\vec{z})
			\leq2^\beta
			\|\mat{M}\|_2^{\beta}
			\sigma_\beta^{\mat{M}}(\vec{k})
			\sigma_\beta^{\mat{M}}(\vec{z}) \)
			for all regular integral matrices with \( \|\mat{M}\|_2\geq 1  \) with slightly bigger constant \( 2^\beta \).
			For the matrices of interest, this slight difference is not that important and we will focus on the former one for simplicity.
		\end{remark}
%
%
%
\section{Interpolation and the Strang-Fix Condition}\label{sec:IPSFC} 
	This section is devoted to interpolation on a pattern \( \mathcal P(\mat{M}) \) and its corresponding periodic Strang-Fix conditions. The periodic Strang-Fix conditions were introduced in \cite{Brumme:1994, Poe95} for tensor product grids as a counterpart to the strong Strang-Fix conditions on the Euclidean space \( \mathbb R^d \) and generalized in \cite{Sprengel:1997, Sprengel:2000}. We generalize them to arbitrary patterns on the torus.
	
	A space of functions \( V \) is called \( \mat{M} \)-invariant, if for all \( \vec{y}\in\mathcal P(\mat{M}) \) and all functions \( \varphi\in V \) the translates \( \operatorname{T}_{\vec{y}}\!\varphi := \varphi(\circ-2\pi\vec{y})\in V\). Especially the space
	\[
		V_{\mat{M}}^\varphi := \operatorname{span}\bigr\{\operatorname{T}_{\vec{y}}\!\varphi\,;\,\vec{y}\in\mathcal P(\mat{M})\bigl\}
	\]
	of translates of \( \varphi \) is \( \mat{M} \)-invariant. A function \( \xi\in V_{\mat{M}}^\varphi \) is of the form \( \xi=\!\displaystyle\sum_{\vec{y}\in\mathcal P(\mat{M})} a_{\vec{y}}\operatorname{T}_{\vec{y}}\!\varphi\).
	For \( \varphi\in \operatorname{L}_1(\mathbb T^d) \) an easy calculation on the Fourier coefficients using the unique decomposition of \( \vec{k}\in\mathbb Z^d \) into \( \vec{k} = \vec{h}+\mat{M}^\T\vec{z} \), \( \vec{h}\in\mathcal G(\mat{M}^\T), \vec{z}\in\mathbb Z^d \), yields, that \( \xi\in V_{\mat{M}}^\varphi \) holds if and only if \begin{equation}\label{eq:inTranslates:ck}
		c_{\vec{h}+\mat{M}^\T\vec{z}}(\xi) = \hat a_{\vec{h}}c_{\vec{h}+\mat{M}^\T\vec{z}}(\varphi) \quad\text{for all } \vec{h}\in\mathcal G(\mat{M}^\T), \vec{z}\in\mathbb Z^d\text{,}
	\end{equation}
	is fulfilled, where \( \vec{\hat a} = \bigl(\hat a_{\vec{h}}\bigr)_{\vec{h}\in\mathcal G(\mathbf{M}^\T)} = \sqrt{m}\mathcal F(\mat{M})\vec{a} \) denotes the discrete Fourier transform of \( \vec{a}\in\mathbb C^m \).
	
	Further, the space of trigonometric polynomials on the generating set \( \mathcal G_{\text{S}}(\mat{M}^\T) \) is denoted by
	\[
	\mathcal T_{\mat{M}} := \Bigl\{\varphi\,;\,
	\varphi=
	\sum_{\vec{h}\in\mathcal G_{\text{S}}(\mat{M}^\T)}a_{\vec{h}}\E^{\I\vec{h}^\T\circ},\ a_{\vec{h}}\in\mathbb C
	\Bigr\}\text{.}
	\]
	For any function \( \varphi\in \operatorname{L}_1(\mathbb T^d) \) the Fourier partial sum \( \operatorname{S}_{\mat{M}}\varphi\in\mathcal T_{\mat{M}} \) given by
	\[
	\operatorname{S}_{\mat{M}}\varphi := \sum_{\vec{h}\in\mathcal G_{\text{S}}(\mat{M}^\T)} c_{\vec{h}}(\varphi)\E^{\I\vec{h}^\T\circ}
	\]
	is such a trigonometric polynomial.
	
	The discrete Fourier coefficients of a pointwise given \( \varphi \) are defined by
	\begin{equation}\label{eq:discrete-Fourier}
		c_{\vec{h}}^{\mat{M}}(\varphi) := \frac{1}{m}\sum_{\vec{y}\in\mathcal P(\mat{M})} \varphi(2\pi\vec{y})\E^{-2\pi\I\vec{h}^\T\vec{y}}
		,\quad\vec{h}\in\mathcal G(\mat{M}^\T)
		\text{,}
	\end{equation}
	which are related to the Fourier coefficients for \( \varphi\in A(\mathbb T^d) \) by the following aliasing formula.
	\begin{lemma}\label{lem:Aliasing}
		Let \( \varphi\in A(\mathbb T^d) \) and the regular matrix \(\mat{M}\in\mathbb Z^{d\times d}\) be given. Then the discrete Fourier coefficients \( c_{\vec{h}}^{\mat{M}}(\varphi) \) are given by
		\begin{equation}\label{eq:aliasing}
			c_{\vec{k}}^{\mat{M}}(\varphi) = \sum_{\vec{z}\in\mathbb Z^d} c_{\vec{k}+\mat{M}^\T\vec{z}}(\varphi),\quad \vec{k}\in\mathbb Z^d\text{.}
		\end{equation}
	\end{lemma}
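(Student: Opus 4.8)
The plan is to substitute the absolutely convergent Fourier series of $\varphi$ into the definition~\eqref{eq:discrete-Fourier} of the discrete Fourier coefficients, interchange the two summations, and then collapse the resulting double sum by means of a character orthogonality relation on the pattern $\mathcal P(\mat{M})$.

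First I would use the hypothesis $\varphi\in A(\mathbb T^d)$: by definition its Fourier series $\varphi(\vec{x})=\sum_{\vec{k}\in\mathbb Z^d}c_{\vec{k}}(\varphi)\E^{\I\vec{k}^\T\vec{x}}$ converges absolutely, hence uniformly, and may in particular be evaluated at the sampling points $\vec{x}=2\pi\vec{y}$ for $\vec{y}\in\mathcal P(\mat{M})$. Inserting this into~\eqref{eq:discrete-Fourier} and using the absolute convergence to exchange the finite sum over $\vec{y}\in\mathcal P(\mat{M})$ with the infinite sum over $\vec{k}\in\mathbb Z^d$ yields
\[
  c_{\vec{h}}^{\mat{M}}(\varphi)=\sum_{\vec{k}\in\mathbb Z^d}c_{\vec{k}}(\varphi)\,\frac1m\sum_{\vec{y}\in\mathcal P(\mat{M})}\E^{2\pi\I(\vec{k}-\vec{h})^\T\vec{y}}.
\]

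The second step is to establish the orthogonality relation
\[
  \frac1m\sum_{\vec{y}\in\mathcal P(\mat{M})}\E^{2\pi\I(\vec{k}-\vec{h})^\T\vec{y}}
  =\begin{cases}1,&\vec{k}\equiv\vec{h}\bmod\mat{M}^\T,\\0,&\text{otherwise.}\end{cases}
\]
For this, write $\vec{k}=\vec{h}'+\mat{M}^\T\vec{z}$ with $\vec{h}'\in\mathcal G(\mat{M}^\T)$ and $\vec{z}\in\mathbb Z^d$; since $\vec{y}\in\Lambda_{\mat{M}}=\mat{M}^{-1}\mathbb Z^d$ we have $\mat{M}\vec{y}\in\mathbb Z^d$, so $(\mat{M}^\T\vec{z})^\T\vec{y}=\vec{z}^\T\mat{M}\vec{y}\in\mathbb Z$ and therefore $\E^{2\pi\I(\mat{M}^\T\vec{z})^\T\vec{y}}=1$; thus the exponential depends only on the congruence class of $\vec{k}$ modulo $\mat{M}^\T$. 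The remaining sum $\frac1m\sum_{\vec{y}}\E^{2\pi\I(\vec{h}'-\vec{h})^\T\vec{y}}$ is, up to the normalisation, the inner product of two rows of the unitary Fourier matrix $\mathcal F(\mat{M})$ and equals $\delta_{[\vec{h}']_{\mat{M}^\T},[\vec{h}]_{\mat{M}^\T}}$; this is the standard character orthogonality on the finite abelian pattern group of order $m$ and may be quoted from \cite{LangemannPrestin:2010,Bergmann:2013}.

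Substituting the orthogonality relation into the double sum annihilates every term except those with $\vec{k}\equiv\vec{h}\bmod\mat{M}^\T$, i.e. $\vec{k}=\vec{h}+\mat{M}^\T\vec{z}$ with $\vec{z}\in\mathbb Z^d$, which gives $c_{\vec{h}}^{\mat{M}}(\varphi)=\sum_{\vec{z}\in\mathbb Z^d}c_{\vec{h}+\mat{M}^\T\vec{z}}(\varphi)$ for every representative $\vec{h}\in\mathcal G(\mat{M}^\T)$. Finally, both sides of~\eqref{eq:aliasing} are invariant under the replacement $\vec{h}\mapsto\vec{h}+\mat{M}^\T\vec{z}'$ (the left side by the computation above, the right side because it already runs over the full coset), so the identity extends from representatives to all $\vec{k}\in\mathbb Z^d$. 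The only point that genuinely requires care is the interchange of the two summations, which is exactly what the hypothesis $\varphi\in A(\mathbb T^d)$ is there to guarantee; the orthogonality relation and the coset argument are routine.
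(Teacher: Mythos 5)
Your proposal is correct and follows essentially the same route as the paper: insert the absolutely convergent Fourier series into the definition of $c_{\vec{h}}^{\mat{M}}(\varphi)$, interchange the sums, and collapse the inner sum via the character orthogonality relation on $\mathcal P(\mat{M})$ (which the paper simply cites from Sloan and Joe, Lemma 2.7, rather than sketching as you do). The extra care you take in justifying the orthogonality relation and the extension from representatives $\vec{h}\in\mathcal G(\mat{M}^\T)$ to all $\vec{k}\in\mathbb Z^d$ is sound but not a different argument.
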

	\begin{proof}
		Writing each point evaluation of \( \varphi \) in \eqref{eq:discrete-Fourier} as its Fourier series, we obtain due to the absolute convergence of the series
		\begin{align*}
			c_{\vec{k}}^{\mat{M}}(\varphi)
			&=
			\frac{1}{m}\sum_{\vec{y}\in\mathcal P(\mat{M})}
			\left(
				\sum_{\vec{h}\in\mathbb Z^d}
				c_{\vec{h}}(\varphi)\E^{2\pi\I\vec{h}^\T\vec{y}}
			\right)
			\E^{-2\pi\I\vec{k}^\T\vec{y}}\\
			&=
			\frac{1}{m}
			\sum_{\vec{h}\in\mathbb Z^d}
			c_{\vec{h}}(\varphi)
			\sum_{\vec{y}\in\mathcal P(\mat{M})}
				\E^{-2\pi\I(\vec{k}-\vec{h})^\T\vec{y}}\\
			&=
			\sum_{\vec{z}\in\mathbb Z^d}
			c_{\vec{k}+\mat{M}^\T\vec{z}}(f)\text{.}
		\end{align*}
		The last equality is valid because the sum over \( \vec{y} \) simplifies to \( m \) if
		\( \vec{k}\equiv\vec{h}\bmod\mat{M}^\T \), and vanishes otherwise, cf. \cite[Lemma 2.7]{SloanJoa:1994}.
	\end{proof}
	\begin{definition}
		\label{def:IP}
		Let \( \mat{M}\in\mathbb Z^{d\times d} \) be a regular matrix. A function \( \operatorname{I}_\mat{M} \in V_\mat{M}^\varphi\) is called fundamental interpolant or Lagrange function of \(  V_\mat{M}^\varphi \) if
		\begin{equation*}
			\operatorname{I}_\mat{M}(2\pi\vec{y}) := \delta_{\vec{0},\vec{y}}^{{\mathbf E}_d},\quad \vec{y}\in\mathcal P(\mat{M}),\quad\text{where }
			\delta_{\vec{x},\vec{y}}^{{\mathbf M}} :=
			\begin{cases}
				1 &\mbox{ if } \vec{y}\equiv \vec{x}\bmod{\mathbf M},\\
				0 &\mbox{ else.}
			\end{cases}
		\end{equation*}
	\end{definition}
	The following lemma characterizes the existence of such a fundamental interpolant.
	\begin{lemma}\label{lem:FI:Existenz}
		Given a regular matrix \(\mat{M}\in\mathbb Z^{d\times d}\) and a function \(\varphi\in A(\mathbb T^d)\), the fundamental interpolant \(\operatorname{I}_\mat{M}\in V_\mat{M}^\varphi\) exists if and only if
		\begin{equation}\label{eq:lem:Existenz}
			\sum_{\vec{z}\in\mathbb Z^d} c_{\vec{h}+\mat{M}^{\T}\vec{z}}(\varphi) \neq 0,\quad\text{ for all }\vec{h}\in\mathcal G(\mat{M}^{\T}).
		\end{equation}
		If the fundamental interpolant \( \operatorname{I}_\mat{M}\in V_\mat{M}^\varphi \) exists, it is uniquely determined.
	\end{lemma}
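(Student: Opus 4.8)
The plan is to reduce the interpolation conditions to a decoupled (diagonal) linear system in the discrete Fourier domain. First I would observe that any candidate $\operatorname{I}_\mat{M}\in V_\mat{M}^\varphi$ is a finite linear combination of translates of $\varphi\in A(\mathbb T^d)$, hence itself lies in $A(\mathbb T^d)$, so that the aliasing formula of Lemma~\ref{lem:Aliasing} may be applied to it. Writing $\operatorname{I}_\mat{M}=\sum_{\vec y\in\mathcal P(\mat M)}a_{\vec y}\operatorname{T}_{\vec y}\varphi$ and $\vec{\hat a}=\sqrt m\,\mathcal F(\mat M)\vec a$, I would use that $\mathcal F(\mat M)$ is a nonsingular (in fact, up to scaling, unitary) $m\times m$ matrix, so prescribing the coefficient vector $\vec a\in\mathbb C^m$ of the ansatz is equivalent to prescribing its discrete Fourier transform $\vec{\hat a}\in\mathbb C^m$.

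Next I would translate the sampling conditions of Definition~\ref{def:IP} into the discrete Fourier domain. Since the sampled values $\bigl(\operatorname{I}_\mat{M}(2\pi\vec y)\bigr)_{\vec y\in\mathcal P(\mat M)}$ and the discrete Fourier coefficients $\bigl(c_{\vec h}^{\mat M}(\operatorname{I}_\mat{M})\bigr)_{\vec h\in\mathcal G(\mat M^\T)}$ are linked by the invertible transform~\eqref{eq:discrete-Fourier}, the requirement $\operatorname{I}_\mat{M}(2\pi\vec y)=\delta_{\vec 0,\vec y}^{\mathbf E_d}$ for all $\vec y\in\mathcal P(\mat M)$ is equivalent to $c_{\vec h}^{\mat M}(\operatorname{I}_\mat{M})=\tfrac1m$ for all $\vec h\in\mathcal G(\mat M^\T)$: one direction is a one-line computation from~\eqref{eq:discrete-Fourier}, the other follows from the inversion formula $\operatorname{I}_\mat{M}(2\pi\vec y)=\sum_{\vec h}c_{\vec h}^{\mat M}(\operatorname{I}_\mat{M})\E^{2\pi\I\vec h^\T\vec y}$ together with $\sum_{\vec h\in\mathcal G(\mat M^\T)}\E^{2\pi\I\vec h^\T\vec y}=m\,\delta_{\vec 0,\vec y}^{\mathbf E_d}$.

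Then I would combine the membership characterization~\eqref{eq:inTranslates:ck} with the aliasing formula~\eqref{eq:aliasing}: for $\operatorname{I}_\mat{M}=\sum_{\vec y}a_{\vec y}\operatorname{T}_{\vec y}\varphi$ this yields
\[
	c_{\vec h}^{\mat M}(\operatorname{I}_\mat{M})=\sum_{\vec z\in\mathbb Z^d}c_{\vec h+\mat M^\T\vec z}(\operatorname{I}_\mat{M})=\hat a_{\vec h}\sum_{\vec z\in\mathbb Z^d}c_{\vec h+\mat M^\T\vec z}(\varphi),\qquad\vec h\in\mathcal G(\mat M^\T).
\]
Consequently a fundamental interpolant exists if and only if the decoupled system $\hat a_{\vec h}\sum_{\vec z}c_{\vec h+\mat M^\T\vec z}(\varphi)=\tfrac1m$, $\vec h\in\mathcal G(\mat M^\T)$, admits a solution $\vec{\hat a}$ (equivalently, a coefficient vector $\vec a$), and this is possible exactly when $\sum_{\vec z}c_{\vec h+\mat M^\T\vec z}(\varphi)\neq 0$ for every $\vec h\in\mathcal G(\mat M^\T)$, i.e.\ condition~\eqref{eq:lem:Existenz}. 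For uniqueness, under~\eqref{eq:lem:Existenz} the system forces $\hat a_{\vec h}=\bigl(m\sum_{\vec z}c_{\vec h+\mat M^\T\vec z}(\varphi)\bigr)^{-1}$ for each $\vec h$, and by~\eqref{eq:inTranslates:ck} this determines all Fourier coefficients of $\operatorname{I}_\mat{M}$, hence $\operatorname{I}_\mat{M}$ as a function.

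The argument has no genuine obstacle; the only point demanding a little care is the bookkeeping around the two discrete transforms — one must check that $\operatorname{I}_\mat{M}\in A(\mathbb T^d)$ before invoking Lemma~\ref{lem:Aliasing}, and must use the bijection $\vec a\leftrightarrow\vec{\hat a}$ consistently so that solvability of the diagonal system in $\vec{\hat a}$ is genuinely equivalent to the existence of a suitable ansatz coefficient vector $\vec a$.
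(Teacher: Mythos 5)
Your proposal is correct and follows essentially the same route as the paper: both reduce the interpolation conditions via the aliasing formula of Lemma~\ref{lem:Aliasing} and the characterization~\eqref{eq:inTranslates:ck} to the diagonal system \(\hat a_{\vec h}\,c_{\vec h}^{\mat M}(\varphi)=\tfrac1m\), whose solvability is exactly condition~\eqref{eq:lem:Existenz}, with uniqueness read off from the forced values of \(\vec{\hat a}\). Your explicit remarks on \(\operatorname{I}_{\mat M}\in A(\mathbb T^d)\) and on the invertibility of the discrete Fourier transform only make explicit what the paper leaves implicit.
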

	\begin{proof}
		Assume the fundamental interpolant \( \operatorname{I}_{\mat{M}}\in V_{\mat{M}}^\varphi\) exists. Hence, there exists a vector \( \vec{a} = (a_{\vec{h}})_{\vec{h}\in\mathcal G(\mat{M}^{\T})} \) such that for its Fourier transform \( \vec{\hat a} = \sqrt{m}\mathcal F(\mat{M})\vec{a} \) it holds due to~\eqref{eq:inTranslates:ck} that
		\begin{equation*}
			c_{\vec{h}+\mat{M}^{\T}\vec{z}}(\operatorname{I}_\mat{M}) = \hat a_{\vec{h}}c_{\vec{h}+\mat{M}^{\T}\vec{z}}(\varphi),\quad\vec{h}\in\mathcal G(\mat{M}^{\T}),\ \vec{z}\in\mathbb Z^d\text{.}
		\end{equation*}
		Applying this equality to the discrete Fourier coefficients of \( \operatorname{I}_{\mat{M}} \) yields
		\begin{equation}\label{eq:interpolant-BS}
			c_{\vec{h}}^\mat{M}(\operatorname{I}_\mat{M})
			=
			\sum_{\vec{z}\in\mathbb Z^d} c_{\vec{h}+\mat{M}^{\T}\vec{z}}(\operatorname{I}_\mat{M})
			=
			\hat a_{\vec{h}}\sum_{\vec{z}\in\mathbb Z^d} c_{\vec{h}+\mat{M}^{\T}\vec{z}}(\varphi)
			=
			\hat a_{\vec{h}}c_{\vec{h}}^\mat{M}(\varphi)\text{.}
		\end{equation}
		The discrete Fourier coefficients are known by Definition~\ref{def:IP} and \cite[Lemma 2.7]{SloanJoa:1994} as \(c_{\vec{h}}^\mat{M}(\operatorname{I}_\mat{M}) = \tfrac{1}{m}\), \( \vec{h}\in\mathcal G(\mat{M}^\T) \), which is nonzero for all \( \vec{h} \) and hence \eqref{eq:lem:Existenz} follows.
		
		On the other hand, if \eqref{eq:lem:Existenz} is fulfilled, then the function \( \xi \), which is defined by
		\begin{equation}\label{eq:ck-Interpolant}
			c_{\vec{k}}(\xi) = \frac{c_{\vec{k}}(\varphi)}{m c_{\vec{k}}^\mat{M}(\varphi)},\quad \vec{k}\in\mathbb Z^d\text{,}
		\end{equation}
		is in the space \( V_{\mat{M}}^\varphi \) having the coefficients \( \hat a_{\vec{h}} = (mc_{\vec{h}}^\mat{M}(\varphi))^{-1}\), \( \vec{h}\in\mathcal G(\mat{M}^\T) \). The discrete Fourier coefficients also fulfill \( c_{\vec{h}}^{\mat{M}}(\xi) = \tfrac{1}{m} \). Hence, again by Definition~\ref{def:IP} and \cite[Lemma 2.7]{SloanJoa:1994}, \( \xi \) is a fundamental interpolant with respect to the pattern \( \mathcal P(\mat{M}) \). If the fundamental interpolant \( \operatorname{I}_\mat{M} \) exists, \eqref{eq:ck-Interpolant} also provides uniqueness.
	\end{proof}
	The associated interpolation operator \( \operatorname{L}_{\mat{M}}f \) is given by
	\begin{equation}\label{eq:interpolantck}	
	\operatorname{L}_{\mat{M}}f := \sum_{\vec{y}\in\mathcal P(\mat{M})} f(2\pi\vec{y})\operatorname{T}_{\vec{y}}\operatorname{I}_{\mat{M}}
	=
	m
	\sum_{\vec{k}\in\mathbb Z^d}
	c_{\vec{k}}^{\mat{M}}(f)c_{\vec{k}}(\operatorname{I}_{\mat{M}})
	\E^{\I\vec{k}^\T\circ}
	\text{.}
	\end{equation}
	The following definition introduces the periodic Strang-Fix conditions, which require the Fourier coefficients \( c_{\vec{k}}(\operatorname{I}_{\mat{M}}) \) of the fundamental interpolant to decay in a certain ellipsoidal way. The condition number \( \kappa_{\mat{M}} \) of \( \mat{M} \) is given by
	\[
		 \kappa_{\mat{M}}
		:=
		\sqrt{\frac{\lambda_d(\mat{M}^\T\mat{M})}{\lambda_1(\mat{M}^\T\mat{M})}}
		= \|\mat{M}\|_2\|\mat{M}^{-1}\|_2
		\text{.}
	\]
%
%
%
	\begin{definition}\label{def:SFC}
		Given a regular expanding matrix \( \mat{M}\in\mathbb Z^{d\times d} \), a fundamental interpolant \( \operatorname{I}_\mat{M}\in \operatorname{L}_1(\mathbb T^d) \) fulfills the  ellipsoidal (periodic) Strang-Fix conditions of order \( s > 0 \) for \( q \geq 1 \)  and an \( \alpha \in \mathbb R^+ \),
		if there exists a nonnegative sequence \( \vec{b}=\{b_{\vec{z}}\}_{\vec{z}\in\mathbb Z^d}\subset \mathbb R_0^+ \), such that for all \( \vec{h}\in\mathcal G (\mat{M}^\T) \), \( \vec{z}\in\mathbb Z^d\backslash\{\vec{0}\} \) we have
		\vspace{.5\baselineskip}
		\begin{enumerate}
			\setlength{\itemsep}{\baselineskip}
			\item \(
			|1-mc_{\vec{h}}(\operatorname{I}_\mat{M})|
			\leq b_{\vec{0}}
			\kappa_{\mat{M}}^{-s}
			\|\mat{M}^{-\T}\vec{h}\|_2^s
			\text{,}
			\)
			\label{item:SFC-inner}
			\item \( |mc_{\vec{h}+\mat{M}^\T\vec{z}}(\operatorname{I}_\mat{M})| \leq
			b_{\vec{z}}\kappa_{\mat{M}}^{-s}\|\mathbf{M}\|_2^{-\alpha}\|\mat{M}^{-\T}\vec{h}\|_2^{s} \)\label{item:SFC-outer}
		\end{enumerate}
		\vspace{.5\baselineskip}
		with\[ \gamma_{\mathrm{SF}} := \| \{
			\sigma_\alpha^{\mat{M}}(\vec{z})
			b_{\vec{z}}
			\}_{\vec{z}\in\mathbb Z^d} | \ell_q(\mathbb Z^d)\| < \infty
			\text{.}
		\]
	\end{definition}
	For both properties we enforce a stronger decay than by the ellipse defined by the level curves of \( \|\mat{M}^{-\T}\circ\|_2 \), i.e., we have an upper bound by \( \kappa_{\mat{M}}^{-s}\|\mat{M}^{-\T}\vec{h}\|_2^s \leq \kappa_{\mat{M}}^{-s}\|\mat{M}^{-\T}\|_2^s\|\vec{h}\|_2^s = \bigl(\lambda_d(\mat{M}^\T\mat{M})\bigr)^{-s/2}\|\vec{h}\|^s \). The second one enforces a further stronger decay with respect to \( \alpha \), i.e., \( \kappa_{\mat{M}}^{-s}\|\mathbf{M}\|_2^{-\alpha}\|\mat{M}^{-\T}\vec{h}\|_2^{s} \leq (\lambda_d(\mat{M}^\T\mat{M}))^{-(\alpha+s)/2}\|\vec{h}\|_2^{s} \). For the one-dimensional case or the tensor product case, i.e., \( \mat{M} = \operatorname{diag}(N,\ldots,N) \) we have \( \kappa_{\mat{M}}=1 \), \( \lambda_1(\mat{M}^\T\mat{M})=\lambda_d(\mat{M}^\T\mat{M})=N \), and this simplifies to the already known case \( N^{-\alpha-s}\|\vec{h}\|_2 \). Looking at the level curves of the map \( \|\mat{M}^{-\T}\circ\| \), we see they produce ellipsoids, where \( |\lambda_d(\mat{M})| \) is the length of the longest axis. Hence the decay is normalized with respect to the longest axis of the ellipsoid.
%
%
%
\section{Error Bounds for Interpolation}\label{sec:IPErr} 
	In order to investigate the error of interpolation \( \|f-\operatorname{L}_{\mat{M}}f\| \), where \( \operatorname{L}_{\mat{M}} \) is the interpolation operator into \( V_{\mat{M}}^{\varphi} \) for certain \( \varphi\in A(\mathbb T^d) \), we use the triangle inequality with respect to any norm
	\begin{equation*}
		\|f-\operatorname{L}_{\mat{M}}f \|
\leq
		\|\operatorname{S}_{\mat{M}}f-\operatorname{L}_{\mat{M}}\operatorname{S}_{\mat{M}}f\|
		+ \|f-\operatorname{S}_{\mat{M}}f \|
		+ \|\operatorname{L}_{\mat{M}}(f-\operatorname{S}_{\mat{M}}f)\|
	\end{equation*}
	and look at these three terms separately.
	\begin{theorem}\label{theorem:trigapprox}
		For an expanding regular matrix \( \mat{M} \in\mathbb Z^{d\times d}\), a trigonometric polynomial \( f\in\mathcal T_{\mat{M}} \) and a fundamental interpolant \( \operatorname{I}_{\mat{M}}\in A(\mathbb T^d) \) fulfilling the ellipsoidal Strang-Fix conditions for fixed values \( s\geq0\), \(\alpha>0 \), and \( q \geq 1 \) we have
		\begin{equation}\label{eq:trigapprox}
			\left\|\left.
					f - \operatorname{L}_{\mat{M}}f
			\right|A_{\mat{M},\,q}^{\alpha}
			\right\|
			\leq
			\left(
				\frac{1
				}{\|\mat{M}\|_2}
			\right)^{\!s}
			\gamma_{\mathrm{SF}}
			\left\|\left. f
			\right|A_{\mat{M},\,q}^{\alpha+s}
			\right\|\text{.}
		\end{equation}
	\end{theorem}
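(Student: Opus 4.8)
The plan is to pass to the Fourier side and exploit that $f\in\mathcal T_{\mat M}$ has Fourier coefficients supported on the finite generating set $\mathcal G_{\mathrm S}(\mat M^\T)$. Write every $\vec k\in\mathbb Z^d$ uniquely as $\vec k=\vec h+\mat M^\T\vec z$ with $\vec h\in\mathcal G_{\mathrm S}(\mat M^\T)$ and $\vec z\in\mathbb Z^d$, and use this generating set in the Strang-Fix conditions. Since $f$ is a trigonometric polynomial it lies in $A(\mathbb T^d)$, so Lemma~\ref{lem:Aliasing} applies and gives $c_{\vec h+\mat M^\T\vec z}^{\mat M}(f)=c_{\vec h}(f)$ for every $\vec z$, while $c_{\vec h+\mat M^\T\vec z}(f)$ equals $c_{\vec h}(f)$ for $\vec z=\vec 0$ and vanishes otherwise. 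Feeding this into the representation~\eqref{eq:interpolantck} of $\operatorname L_{\mat M}f$ yields $c_{\vec h+\mat M^\T\vec z}(\operatorname L_{\mat M}f)=m\,c_{\vec h}(f)\,c_{\vec h+\mat M^\T\vec z}(\operatorname I_{\mat M})$, hence
\[
	c_{\vec h}(f-\operatorname L_{\mat M}f)=c_{\vec h}(f)\bigl(1-m\,c_{\vec h}(\operatorname I_{\mat M})\bigr)
	\qquad(\vec z=\vec 0),
\]
and $c_{\vec h+\mat M^\T\vec z}(f-\operatorname L_{\mat M}f)=-m\,c_{\vec h}(f)\,c_{\vec h+\mat M^\T\vec z}(\operatorname I_{\mat M})$ for $\vec z\neq\vec 0$. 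This is exactly the place where the two Strang-Fix conditions enter: the diagonal ($\vec z=\vec 0$) part is controlled by condition~\eqref{item:SFC-inner}, and the off-diagonal part, which involves the interpolant only and not $f$, by condition~\eqref{item:SFC-outer}.

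Next I would expand $\|f-\operatorname L_{\mat M}f\,|\,A^{\alpha}_{\mat M,q}\|^q$ as the double sum over $\vec h\in\mathcal G_{\mathrm S}(\mat M^\T)$ and $\vec z\in\mathbb Z^d$ of $\bigl|\sigma^{\mat M}_\alpha(\vec h+\mat M^\T\vec z)\,c_{\vec h+\mat M^\T\vec z}(f-\operatorname L_{\mat M}f)\bigr|^q$ and split off the term $\vec z=\vec 0$. Two weight facts do the work: the multiplicativity $\sigma^{\mat M}_\alpha(\vec k)\sigma^{\mat M}_s(\vec k)=\sigma^{\mat M}_{\alpha+s}(\vec k)$, immediate from the definition, and the estimate $\kappa_{\mat M}^{-s}\|\mat M^{-\T}\vec h\|_2^{s}\le\|\mat M\|_2^{-s}\sigma^{\mat M}_s(\vec h)$, which follows from $\kappa_{\mat M}\ge1$ together with $\sigma^{\mat M}_s(\vec h)\ge\|\mat M\|_2^s\|\mat M^{-\T}\vec h\|_2^s$. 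For $\vec z=\vec 0$, condition~\eqref{item:SFC-inner} then gives $\bigl|\sigma^{\mat M}_\alpha(\vec h)\,c_{\vec h}(f-\operatorname L_{\mat M}f)\bigr|\le\|\mat M\|_2^{-s}\,b_{\vec 0}\,\bigl|\sigma^{\mat M}_{\alpha+s}(\vec h)\,c_{\vec h}(f)\bigr|$.

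For $\vec z\neq\vec 0$ I would first apply Lemma~\ref{lem:sigma-ineq} to split $\sigma^{\mat M}_\alpha(\vec h+\mat M^\T\vec z)\le\|\mat M\|_2^{\alpha}\sigma^{\mat M}_\alpha(\vec h)\sigma^{\mat M}_\alpha(\vec z)$, and then combine it with condition~\eqref{item:SFC-outer} and the weight estimate above. The factor $\|\mat M\|_2^{\alpha}$ produced by the lemma cancels precisely the $\|\mat M\|_2^{-\alpha}$ built into~\eqref{item:SFC-outer}, leaving $\bigl|\sigma^{\mat M}_\alpha(\vec h+\mat M^\T\vec z)\,c_{\vec h+\mat M^\T\vec z}(f-\operatorname L_{\mat M}f)\bigr|\le\|\mat M\|_2^{-s}\,b_{\vec z}\,\sigma^{\mat M}_\alpha(\vec z)\,\bigl|\sigma^{\mat M}_{\alpha+s}(\vec h)\,c_{\vec h}(f)\bigr|$. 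Summing the $q$-th powers over $\vec h$ and $\vec z$, the $\vec h$-sum factors out as $\sum_{\vec h\in\mathcal G_{\mathrm S}(\mat M^\T)}\bigl|\sigma^{\mat M}_{\alpha+s}(\vec h)c_{\vec h}(f)\bigr|^q=\|f\,|\,A^{\alpha+s}_{\mat M,q}\|^q$ — here $f\in\mathcal T_{\mat M}$ is used again — and, since $\sigma^{\mat M}_\alpha(\vec 0)=1$, the $\vec z$-sum recombines the $\vec z=\vec 0$ and $\vec z\neq\vec 0$ contributions into $\sum_{\vec z\in\mathbb Z^d}\bigl|\sigma^{\mat M}_\alpha(\vec z)b_{\vec z}\bigr|^q=\gamma_{\mathrm{SF}}^q$. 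Taking $q$-th roots yields~\eqref{eq:trigapprox}; the case $q=\infty$ is identical with suprema in place of sums. The argument is essentially bookkeeping, so the only step needing care is the weight algebra — in particular checking that the $\|\mat M\|_2^{\alpha}$ from Lemma~\ref{lem:sigma-ineq} is matched exactly by the $\|\mat M\|_2^{-\alpha}$ in the second Strang-Fix condition, and that the $\kappa_{\mat M}^{-s}$ appearing in both conditions is harmlessly discarded via $\kappa_{\mat M}\ge1$.
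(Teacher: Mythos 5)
Your proposal is correct and follows essentially the same route as the paper: compute the Fourier coefficients of \(f-\operatorname{L}_{\mat{M}}f\) via the decomposition \(\vec{k}=\vec{h}+\mat{M}^\T\vec{z}\) (using that \(f\in\mathcal T_{\mat{M}}\) forces \(c^{\mat{M}}_{\vec{h}}(f)=c_{\vec{h}}(f)\)), then apply the two Strang--Fix conditions together with Lemma~\ref{lem:sigma-ineq} so that the \(\|\mat{M}\|_2^{\alpha}\) from the lemma cancels the \(\|\mat{M}\|_2^{-\alpha}\) in the second condition, and finally factor the double sum into \(\gamma_{\mathrm{SF}}^q\) times \(\|f\,|\,A^{\alpha+s}_{\mat{M},q}\|^q\). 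Your handling of the weight algebra (discarding \(\kappa_{\mat{M}}^{-s}\le 1\) and using \(\|\mat{M}\|_2^{s}\|\mat{M}^{-\T}\vec{h}\|_2^{s}\le\sigma^{\mat{M}}_{s}(\vec{h})\)) is an equivalent, slightly more direct version of the paper's estimate and yields the same constant.
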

	\begin{proof}
		The proof is given for \( q<\infty \). For \( q=\infty \) the same arguments apply with the usual modifications with respect to the norm.
		Looking at the Fourier coefficients of \( \operatorname{L}_{\mat{M}}f \) in \eqref{eq:interpolantck} for \(f\in\mathcal T_{\mat{M}}\) yields
		\[
			c_{\vec{h}}(\operatorname{L}_{\mat{M}}f)
			= mc_{\vec{h}}^{\mat{M}}(f)c_{\vec{h}}(\operatorname{I}_{\mat{M}})
			= mc_{\vec{h}}(f)c_{\vec{h}}(\operatorname{I}_{\mat{M}}),
			\quad \vec{h}\in\mathcal G(\mat{M}^\T)\text{,}
		\]
		and hence we have
		\begin{equation*}
			f - \operatorname{L}_{\mat{M}}f
			=
				\sum_{\vec{k}\in\mathbb Z^d}\bigl(
					c_{\vec{k}}(f) - mc_{\vec{k}}^{\mat{M}}(f)c_{\vec{k}}(\operatorname{I}_{\mat{M}})
				\bigr)
				\E^{\I\vec{k}^\T\circ}
			\text{.}
		\end{equation*}
		Using the unique decomposition of \( \vec{k}\in\mathbb Z^d \) into \( \vec{k} = \vec{h} + \mat{M}^\T\vec{z}\), \(\vec{h}\in\mathcal G(\mat{M}^\T),\ \vec{z}\in\mathbb Z^d \), yields
		\begin{equation*}
						f - \operatorname{L}_{\mat{M}}f
			=
				\sum_{\vec{h}\in\mathcal G(\mat{M}^\T)}
				c_{\vec{h}}(f)e^{\I\vec{h}^\T\circ}
				\Bigl(\!
					\bigl(
					1-mc_{\vec{h}}(\operatorname{I}_{\mat{M}})
					\bigr) -
					\sum_{\vec{z}\in\mathbb Z^d\backslash\{\vec{0}\}}
					m c_{\vec{h}+\mat{M}^\T\vec{z}}(\operatorname{I}_{\mat{M}})
					\E^{\I\mat{M}^\T\vec{z}\circ}
				\Bigr)
				\text{.}
		\end{equation*}
		Applying the definition of the norm in \( A_{\mat{M},\,q}^{\alpha}(\mathbb T^d) \), we obtain
		\begin{equation*}
			\begin{split}
					&\Bigl\|
							f - \operatorname{L}_{\mat{M}}f
					\Bigr|A_{\mat{M},\,q}^{\alpha}
					\Bigr\|^q\\
				&\ \ =
				\!\!\!\!
				\sum_{\vec{h}\in\mathcal G(\mat{M}^\T)}
				\!\!\!\!
				|c_{\vec{h}}(f)|^q
				\Bigl(
					| (1-mc_{\vec{h}}(\operatorname{I}_{\mat{M}}))\sigma_{\alpha}^{\mat{M}}(\vec{h}) |^q
					+
					\!\!\!\!
					\sum_{\vec{z}\in\mathbb Z^d\backslash\{\vec{0}\}}
					\!\!\!\!
					| m c_{\vec{h}+\mat{M}^\T\vec{z}}(\operatorname{I}_{\mat{M}}) \sigma_{\alpha}^{\mat{M}}(\vec{h}+\mat{M}^\T\vec{z}) | ^q
					\!
				\Bigr)\text{.}
			\end{split}
		\end{equation*}
		Using the Strang-Fix conditions of the fundamental interpolant \(\operatorname{I}_{\mat{M}}\) and Lemma~\ref{lem:sigma-ineq} we get the following upper bound
		\begin{align*}
			\Bigl\|
			f - \operatorname{L}_{\mat{M}}f
			\Bigr|&A_{\mat{M},\,q}^{\alpha}
			\Bigr\|^q\\
			&\leq
			\sum_{\vec{h}\in\mathcal G(\mat{M}^\T)}
			|c_{\vec{h}}(f)|^q
			\biggl(
				b_{\vec{0}}^q\|\mat{M}^{-\T}\vec{h}\|^{sq}_2\sigma_{\alpha q}^\mat{M}(\vec{h})\kappa_{\mat{M}}^{-sq}
				\\&\qquad+
				\sum_{\vec{z}\in\mathbb Z^d\backslash\{\vec{0}\}}
				b_{\vec{z}}^q\kappa_{\mat{M}}^{-sq}\|\mathbf{M}\|_2^{-\alpha q}\|\mat{M}^{-\T}\vec{h}\|_2^{sq}\sigma_{\alpha q}^{\mat{M}}(\vec{h}+\mat{M}^\T\vec{z})
			\biggr)
				\\
				&\leq
				\biggl(
				\sum_{\vec{h}\in\mathcal G(\mat{M}^\T)}
					|c_{\vec{h}}(f)|^q
					\|\mat{M}^{-\T}\|_2^{sq}\kappa_{\mat{M}}^{-sq}\|\vec{h}\|_2^{sq}\sigma_{\alpha q}^{\mat{M}}(\vec{h})
				\biggr)
				\biggl(
					\sum_{\vec{z}\in\mathbb Z^d}
					(\sigma_\alpha^{\mat{M}}(\vec{z})b_{\vec{z}})^q
				\biggr)\\
				&\leq
				\gamma_{\mathrm{SF}}^q
				\|\mat{M}\|_2^{-sq}
				\bigl\|f\bigr|A^{\alpha+s}_{\mat{M},q}\bigr\|^q\text{.}\tag*{\qed}
		\end{align*}
		\let\qed\relax
	\end{proof}
%
%
	\begin{theorem}\label{theorem:approxtrig}
		Let \( \mat{M}\in\mathbb Z^{d\times d} \) be regular. If \(f\in A_{\mat{M},\,q}^{\mu}(\mathbb T^d)\), \(q\geq1\), \(\mu\geq\alpha\geq 0\), then
		\begin{equation*}
			\left\|\left.f-\operatorname{S}_{\mat{M}}f\right|A_{\mat{M},\,q}^{\alpha}
			\right\|
			\leq
			\left(\frac{2}{\|\mat{M}\|_2}\right)^{\mu-\alpha}
				\|\left.f\right| A_{\mat{M},\,q}^{\mu}
				\|\text{.}
		\end{equation*}
	\end{theorem}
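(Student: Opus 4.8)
The plan is to argue entirely on the Fourier side, in the same spirit as the proof of Theorem~\ref{theorem:trigapprox}. First I would record that, by definition of the Fourier partial sum, $f-\operatorname{S}_{\mat{M}}f$ has Fourier coefficients $c_{\vec{k}}(f)$ for $\vec{k}\in\mathbb Z^d\setminus\mathcal G_{\text{S}}(\mat{M}^\T)$ and $0$ for $\vec{k}\in\mathcal G_{\text{S}}(\mat{M}^\T)$, so that
\[
	\bigl\|f-\operatorname{S}_{\mat{M}}f\bigm|A_{\mat{M},q}^{\alpha}\bigr\|^q
	=
	\sum_{\vec{k}\in\mathbb Z^d\setminus\mathcal G_{\text{S}}(\mat{M}^\T)}
	\bigl(\sigma_\alpha^{\mat{M}}(\vec{k})\bigr)^q\,|c_{\vec{k}}(f)|^q .
\]
From here everything reduces to comparing $\sigma_\alpha^{\mat{M}}(\vec{k})$ with $\sigma_\mu^{\mat{M}}(\vec{k})$ for exactly those frequencies $\vec{k}$ that survive in this sum.

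The key geometric observation — and the only step that is not a routine estimate — is that every $\vec{k}\in\mathbb Z^d\setminus\mathcal G_{\text{S}}(\mat{M}^\T)$ satisfies $\|\mat{M}^{-\T}\vec{k}\|_2\geq\tfrac12$. To establish this I would use the unique decomposition $\vec{k}=\vec{h}+\mat{M}^\T\vec{z}$ with $\vec{h}\in\mathcal G_{\text{S}}(\mat{M}^\T)$ and $\vec{z}\in\mathbb Z^d$; since $\vec{k}$ is not its own representative, $\vec{z}\neq\vec{0}$. Then $\mat{M}^{-\T}\vec{k}=\mat{M}^{-\T}\vec{h}+\vec{z}$ with $\mat{M}^{-\T}\vec{h}\in[-\tfrac12,\tfrac12)^d$, and choosing a coordinate $j$ with $z_j\neq0$, hence $|z_j|\geq1$, gives $|(\mat{M}^{-\T}\vec{k})_j|\geq|z_j|-|(\mat{M}^{-\T}\vec{h})_j|\geq1-\tfrac12=\tfrac12$, so that $\|\mat{M}^{-\T}\vec{k}\|_2\geq|(\mat{M}^{-\T}\vec{k})_j|\geq\tfrac12$.

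With this in hand I would set $t:=1+\|\mat{M}\|_2^2\|\mat{M}^{-\T}\vec{k}\|_2^2$, so that $\sigma_\alpha^{\mat{M}}(\vec{k})=t^{\alpha/2}$ and $\sigma_\mu^{\mat{M}}(\vec{k})=t^{\mu/2}$. For $\vec{k}\notin\mathcal G_{\text{S}}(\mat{M}^\T)$ the previous step yields $t\geq\|\mat{M}\|_2^2/4$, and since $\mu\geq\alpha$ the exponent $(\alpha-\mu)/2$ is nonpositive, so
\[
	\sigma_\alpha^{\mat{M}}(\vec{k})
	=t^{(\alpha-\mu)/2}\,\sigma_\mu^{\mat{M}}(\vec{k})
	\leq\Bigl(\tfrac{\|\mat{M}\|_2^2}{4}\Bigr)^{(\alpha-\mu)/2}\sigma_\mu^{\mat{M}}(\vec{k})
	=\Bigl(\tfrac{2}{\|\mat{M}\|_2}\Bigr)^{\mu-\alpha}\sigma_\mu^{\mat{M}}(\vec{k}) .
\]

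Finally I would substitute this bound into the displayed expression for $\|f-\operatorname{S}_{\mat{M}}f\mid A_{\mat{M},q}^{\alpha}\|^q$, pull the constant $\bigl(\tfrac{2}{\|\mat{M}\|_2}\bigr)^{(\mu-\alpha)q}$ out of the sum, and enlarge the remaining sum from $\mathbb Z^d\setminus\mathcal G_{\text{S}}(\mat{M}^\T)$ to all of $\mathbb Z^d$, which gives $\bigl(\tfrac{2}{\|\mat{M}\|_2}\bigr)^{(\mu-\alpha)q}\|f\mid A_{\mat{M},q}^{\mu}\|^q$; taking $q$-th roots proves the case $q<\infty$, and for $q=\infty$ the identical pointwise comparison of the weights combined with the essential supremum gives the statement. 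I expect the coordinate estimate bounding $\|\mat{M}^{-\T}\vec{k}\|_2$ from below to be the only point requiring genuine thought; the rest is bookkeeping with the definition of $\sigma_\beta^{\mat{M}}$.
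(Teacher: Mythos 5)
Your proposal is correct and follows essentially the same route as the paper: both reduce to the pointwise comparison $\sigma_\alpha^{\mat{M}}(\vec{k})=\sigma_{\alpha-\mu}^{\mat{M}}(\vec{k})\,\sigma_\mu^{\mat{M}}(\vec{k})$ on the frequencies outside $\mathcal G_{\text{S}}(\mat{M}^\T)$, and both rest on the same geometric fact that $\mat{M}^{-\T}\vec{k}$ then lies outside $[-\tfrac12,\tfrac12)^d$, giving $\|\mat{M}^{-\T}\vec{k}\|_2\geq\tfrac12$ and hence the factor $(2/\|\mat{M}\|_2)^{\mu-\alpha}$. Your coordinatewise derivation of that lower bound via $|z_j|\geq1$ is just a slightly more explicit version of the paper's argument.
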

	\begin{proof}
		This proof is given for \( q<\infty \). For \( q=\infty \) the same arguments apply with the usual modifications with respect to the norm. We examine the left-hand side of the inequality, apply \(\sigma_{\alpha}^{\mat{M}}(\vec{k}) = \sigma_{\alpha-\mu}^{\mat{M}}(\vec{k})\sigma_{\mu}^{\mat{M}}(\vec{k})\), and obtain
		\begin{equation*}
			\begin{split}
				\bigl\| f - \operatorname{S}_{\mat{M}}f\,\bigr|\,A_{\mat{M},q}^{\alpha} \bigr\|
			&=
				\bigl\|
				 	\{ \sigma_{\alpha}^{\mat{M}}(\vec{k})c_{\vec{k}}(f)\}_{\vec{k}\in\mathbb Z^d \backslash \mathcal G_{\text{S}}(\mat{M}^\T)}
				\,\bigr|\,
				\ell_q\bigl(\mathbb Z^d\backslash\mathcal G_{\text{S}}(\mat{M}^\T)\bigr)\bigr\|\\
			&\leq
				\max_{\vec{k}\in\mathbb Z^d\backslash\mathcal G_{\text{S}}(\mat{M}^\T)}\sigma_{\alpha-\mu}^{\mat{M}}(\vec{k})
				\bigl\| f\,\bigr|\, A_{\mat{M},q}^{\mu}
				\bigr\|\text{.}
			\end{split}
		\end{equation*}
		The decomposition of \( \vec{k}\in\mathbb Z^d\backslash\mathcal G_{\text{S}}(\mat{M}^\T) \) into \(\vec{k} = \vec{h} + \mat{M}^\T\vec{z}\), \(\vec{h}\in\mathcal G_{\text{S}}(\mat{M}^\T)\), yields that \(\vec{0}\neq\vec{z}\in\mathbb Z^d\) and hence none of these integral points lies inside the parallelotope \( \mat{M}^\T\bigl[-\tfrac{1}{2},\tfrac{1}{2}\bigr)^d \).
		Hence, \( \mat{M}^{-\T}\vec{k} \) lies outside \( \bigl[-\tfrac{1}{2},\tfrac{1}{2}\bigr)^d \) and we have
		\begin{align*}
				\max_{\vec{k}\in\mathbb Z^d\backslash\mathcal G_{\text{S}}(\mat{M}^\T)} \sigma_{\alpha-\mu}^{\mat{M}}(\vec{k})
				&=
				\max_{\vec{k}\in\mathbb Z^d\backslash\mathcal G_{\text{S}}(\mat{M}^\T)}
					(1+\|\mat{M}\|_2^2\|\mat{M}^{-\T}\vec{k}\|_2^2)^{\frac{\alpha-\mu}{2}}\\
				&\leq
					\max_{j\in\{1,\ldots,d\}}
			\bigg(1+\frac{\|\mat{M}\|_2^2}{4}
			\bigg)^{\frac{\alpha-\mu}{2}}\\
				&\leq
			\bigg(\frac{\|\mat{M}\|_2^2}{4
			}
			\bigg)^{\frac{\alpha-\mu}{2}}
			\text{.}\tag*{\qed}
		\end{align*}
		\let\qed\relax
	\end{proof}
	Indeed, Theorem \ref{theorem:approxtrig} does hold for any regular matrix \( \mat{M} \). It is not required that the matrix has to be expanding. For the following theorem, let \( |\vec{z}| := \bigl(|z_1|,\ldots,|z_d|\bigr)^\T\) denote the vector of the absolute values of the elements of the vector \( \vec{z}\in\mathbb Z^d \).
	\begin{theorem}\label{theorem:approxinterpol}
		For an expanding regular matrix \( \mat{M}\in\mathbb Z^{d\times d} \) let \( \operatorname{I}_{\mat{M}} \) be a fundamental interpolant such that
		\begin{equation*}
			\gamma_{\mathrm{IP}} := m\begin{cases}\!
				\max\limits_{\vec{h}\in\mathcal G_{\text{S}}(\mat{M}^\T)}
					\biggl(
						|c_{\vec{h}}(\operatorname{I}_{\mat{M}})|^q +
							\|\mat{M}\|_2^{\alpha q}
							\displaystyle\sum\limits_{\vec{z}\in\mathbb Z^d\backslash\{\vec{0}\}}
							 |\sigma_{\alpha}^{\mat{M}}(\vec{z})c_{\vec{h}+\mat{M}^\T\vec{z}}(\operatorname{I}_{\mat{M}})|^q\!
					\biggr)^{\!1/q}\\
			\hspace{.7\textwidth}
			\mbox{ if } q < \infty\text{,}
			\\
			\ \\
			\max\limits_{\vec{h}\in\mathcal G_{\text{S}}(\mat{M}^\T)}
				\sup
				\Bigl\{ |c_{\vec{h}}(\operatorname{I}_{\mat{M}})|,\
					\|\mat{M}\|_2^{\alpha}
					|\sigma_{\alpha}^{\mat{M}}(\vec{z})c_{\vec{h}+\mat{M}^\T\vec{z}}(\operatorname{I}_{\mat{M}})|
					; \vec{z}\in\mathbb Z^d\backslash \{\vec{0}\}
				\Bigr\}
			\\
			\hspace{.7\textwidth}
			\mbox{ if } q=\infty
			\end{cases}
		\end{equation*}
		is finite. Then we get for \(f\in A^{\mu}_{\mat{M},\,q}(\mathbb T^d)\), \(q \geq 1\), \(\mu\geq\alpha\geq 0\), and \(\mu > d(1-1/q)\)
		\begin{equation*}
			\left\|\left.
			\operatorname{L}_{\mat{M}}(f-\operatorname{S}_{\mat{M}}f)
			\right|
			A_{\mat{M},\,q}^{\alpha}
			\right\|
			\leq
			\gamma_{\mathrm{IP}}\gamma_{\mathrm{Sm}}
			\left(\frac{1}{\|\mat{M}\|_2}\right)^{\mu-\alpha}
			\bigl\|f\bigr| A_{\mat{M},\,q}^{\mu}\bigr\|\text{,}
		\end{equation*}
		where
		\begin{equation*}
			\gamma_{\mathrm{Sm}} := (1+d)^{\alpha/2}2^{-\mu}
			\begin{cases}
				\biggl(
					\sum\limits_{\vec{z}\in\mathbb Z^d\backslash\{\vec{0}\}}
					\|2|\vec{z}|-\vec{1}\|^{-p\mu}_2
					\biggr)^{1/p}
					&\mbox{ if } q > 1, \tfrac{1}{p}+\tfrac{1}{q} = 1\text{,}\\
					\sup\limits_{\vec{z}\in\mathbb Z^d\backslash\{\vec{0}\}}\|2|\vec{z}|-\vec{1}\|^{-\mu}_2
					&\mbox{ if } q = 1\text{.}
			\end{cases}
		\end{equation*}
	\end{theorem}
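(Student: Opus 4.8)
The plan is to push everything to the Fourier side and then split the estimate into a part governed by $\operatorname{I}_{\mat M}$ (absorbed into $\gamma_{\mathrm{IP}}$) and a part governed by the smoothness of $f$ (absorbed into $\gamma_{\mathrm{Sm}}$ and the power of $\|\mat M\|_2$). Set $g := f - \operatorname{S}_{\mat M}f$, so that $c_{\vec{k}}(g)=c_{\vec{k}}(f)$ for $\vec{k}\in\mathbb Z^d\setminus\mathcal G_{\text{S}}(\mat{M}^\T)$ and $c_{\vec{k}}(g)=0$ otherwise. Since the discrete Fourier coefficients $c^{\mat{M}}_{\vec{k}}(g)$ are $\mat{M}^\T$-periodic in $\vec{k}$, \eqref{eq:interpolantck} gives, for $\vec{k}=\vec{h}+\mat{M}^\T\vec{w}$ with $\vec{h}\in\mathcal G_{\text{S}}(\mat{M}^\T)$, $\vec{w}\in\mathbb Z^d$, that $c_{\vec{h}+\mat{M}^\T\vec{w}}(\operatorname{L}_{\mat{M}}g)=m\,c^{\mat{M}}_{\vec{h}}(g)\,c_{\vec{h}+\mat{M}^\T\vec{w}}(\operatorname{I}_{\mat{M}})$; moreover, by Lemma~\ref{lem:Aliasing} and the vanishing of $c_{\vec{k}}(g)$ on $\mathcal G_{\text{S}}(\mat{M}^\T)$, one has $c^{\mat{M}}_{\vec{h}}(g)=\sum_{\vec{z}\in\mathbb Z^d\setminus\{\vec{0}\}}c_{\vec{h}+\mat{M}^\T\vec{z}}(f)$ for every $\vec{h}\in\mathcal G_{\text{S}}(\mat{M}^\T)$. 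Writing out the norm with the unique decomposition $\vec{k}=\vec{h}+\mat{M}^\T\vec{w}$ yields
\begin{equation*}
	\bigl\|\operatorname{L}_{\mat{M}}g\,\bigl|\,A^\alpha_{\mat{M},q}\bigr\|^q
	=\sum_{\vec{h}\in\mathcal G_{\text{S}}(\mat{M}^\T)}\bigl(m\,|c^{\mat{M}}_{\vec{h}}(g)|\bigr)^q\sum_{\vec{w}\in\mathbb Z^d}\bigl|c_{\vec{h}+\mat{M}^\T\vec{w}}(\operatorname{I}_{\mat{M}})\,\sigma^{\mat{M}}_\alpha(\vec{h}+\mat{M}^\T\vec{w})\bigr|^q\text{.}
\end{equation*}

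For the inner sum I would keep the summand $\vec{w}=\vec{0}$ as it is and apply Lemma~\ref{lem:sigma-ineq} to the summands $\vec{w}\neq\vec{0}$ in the form $\sigma^{\mat{M}}_\alpha(\vec{h}+\mat{M}^\T\vec{w})\le\|\mat{M}\|_2^\alpha\sigma^{\mat{M}}_\alpha(\vec{h})\sigma^{\mat{M}}_\alpha(\vec{w})$. This pulls out a common factor $\sigma^{\mat{M}}_\alpha(\vec{h})^q$ and leaves exactly the bracket $|c_{\vec{h}}(\operatorname{I}_{\mat{M}})|^q+\|\mat{M}\|_2^{\alpha q}\sum_{\vec{z}\neq\vec{0}}|\sigma^{\mat{M}}_\alpha(\vec{z})c_{\vec{h}+\mat{M}^\T\vec{z}}(\operatorname{I}_{\mat{M}})|^q$ defining $\gamma_{\mathrm{IP}}$, so the inner sum is $\le\sigma^{\mat{M}}_\alpha(\vec{h})^q(\gamma_{\mathrm{IP}}/m)^q$. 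The powers of $m$ cancel and one is left with
\begin{equation*}
	\bigl\|\operatorname{L}_{\mat{M}}g\,\bigl|\,A^\alpha_{\mat{M},q}\bigr\|\le\gamma_{\mathrm{IP}}\Bigl(\sum_{\vec{h}\in\mathcal G_{\text{S}}(\mat{M}^\T)}\bigl(\sigma^{\mat{M}}_\alpha(\vec{h})\,|c^{\mat{M}}_{\vec{h}}(g)|\bigr)^q\Bigr)^{1/q}\text{.}
\end{equation*}
(For $q=\infty$ the same manipulations apply with the corresponding modifications.)

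It remains to estimate the last sum against $\|f\,|\,A^\mu_{\mat{M},q}\|$. Inserting $\sigma^{\mat{M}}_\mu(\vec{h}+\mat{M}^\T\vec{z})$ as a multiplier and divisor,
\begin{equation*}
	\sigma^{\mat{M}}_\alpha(\vec{h})\,|c^{\mat{M}}_{\vec{h}}(g)|\le\sum_{\vec{z}\neq\vec{0}}\frac{\sigma^{\mat{M}}_\alpha(\vec{h})}{\sigma^{\mat{M}}_\mu(\vec{h}+\mat{M}^\T\vec{z})}\,\bigl|\sigma^{\mat{M}}_\mu(\vec{h}+\mat{M}^\T\vec{z})\,c_{\vec{h}+\mat{M}^\T\vec{z}}(f)\bigr|\text{,}
\end{equation*}
I would apply Hölder's inequality in $\vec{z}$ with $\tfrac1p+\tfrac1q=1$ (replaced by the obvious supremum--sum bound when $q=1$), and then estimate the ratio by two elementary inequalities using only $\|\mat{M}\|_2\ge2$: the upper bound $\sigma^{\mat{M}}_\alpha(\vec{h})\le\|\mat{M}\|_2^\alpha\bigl(\tfrac{1+d}{4}\bigr)^{\alpha/2}$, valid because $\mat{M}^{-\T}\vec{h}\in[-\tfrac12,\tfrac12)^d$, and a lower bound for $\sigma^{\mat{M}}_\mu(\vec{h}+\mat{M}^\T\vec{z})$ in terms of $\|\mat{M}\|_2^\mu$ and $\|2|\vec{z}|-\vec{1}\|_2^\mu$, which comes from the fact that $\mat{M}^{-\T}(\vec{h}+\mat{M}^\T\vec{z})=\mat{M}^{-\T}\vec{h}+\vec{z}$ has $i$-th coordinate of modulus at least $\tfrac12(2|z_i|-1)$ for every index with $z_i\neq0$. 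Pulling the resulting constant and the power $\|\mat{M}\|_2^{\alpha-\mu}$ out of the sum over $\vec{h}$, the remaining double sum $\sum_{\vec{h}\in\mathcal G_{\text{S}}(\mat{M}^\T)}\sum_{\vec{z}\neq\vec{0}}|\sigma^{\mat{M}}_\mu(\vec{h}+\mat{M}^\T\vec{z})\,c_{\vec{h}+\mat{M}^\T\vec{z}}(f)|^q$ runs, by the unique decomposition $\vec{k}=\vec{h}+\mat{M}^\T\vec{z}$, over all $\vec{k}\in\mathbb Z^d\setminus\mathcal G_{\text{S}}(\mat{M}^\T)$ and is therefore $\le\|f\,|\,A^\mu_{\mat{M},q}\|^q$; the hypothesis $\mu>d(1-1/q)$ is precisely what makes $\sum_{\vec{z}\neq\vec{0}}\|2|\vec{z}|-\vec{1}\|_2^{-p\mu}$ (respectively the supremum for $q=1$) finite. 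Collecting constants then produces $\gamma_{\mathrm{Sm}}$ and the factor $(1/\|\mat{M}\|_2)^{\mu-\alpha}$.

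The step I expect to demand the most care is the lower estimate for $\sigma^{\mat{M}}_\mu(\vec{h}+\mat{M}^\T\vec{z})$: one has to extract the weight $\|2|\vec{z}|-\vec{1}\|_2$ uniformly in $\vec{h}$ — the indices with $z_i=0$, where $\mat{M}^{-\T}\vec{h}+\vec{z}$ can lie arbitrarily close to a coordinate hyperplane, are the annoying case — while retaining the full power $\|\mat{M}\|_2^\mu$, since it is the cancellation of this power against $\sigma^{\mat{M}}_\alpha(\vec{h})\le\|\mat{M}\|_2^\alpha\cdot\mathrm{const}$ that yields the decay rate $(1/\|\mat{M}\|_2)^{\mu-\alpha}$. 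The secondary technical point is to check that the application of Lemma~\ref{lem:sigma-ineq} in the second step really leaves behind exactly the expression inside $\gamma_{\mathrm{IP}}$ and nothing more.
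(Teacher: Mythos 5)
Your proposal follows the paper's proof essentially step for step: the same decomposition \( \vec k=\vec h+\mat M^\T\vec z \), the same application of Lemma~\ref{lem:sigma-ineq} to peel off exactly the bracket defining \( \gamma_{\mathrm{IP}} \), and the same aliasing-plus-H\"older estimate combined with the bounds \( \sigma_\alpha^{\mat M}(\vec h)\le(1+d)^{\alpha/2}\|\mat M\|_2^{\alpha} \) and \( \sigma_{-\mu p}^{\mat M}(\vec h+\mat M^\T\vec z)\le\bigl(\tfrac{\|\mat M\|_2}{2}\bigr)^{-p\mu}\|2|\vec z|-\vec 1\|_2^{-p\mu} \). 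The one delicate point you flag --- extracting \( \|2|\vec z|-\vec 1\|_2 \) uniformly in \( \vec h \) at coordinates with \( z_i=0 \) --- is genuine, and the paper's own proof passes over it with the same coordinatewise estimate you describe.
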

	\begin{proof}
		This proof is given for \( q<\infty \). For \( q=\infty \) the same arguments apply with the usual modifications with respect to the norm. We write the norm on the left-hand side of the inequality as
		\begin{equation*}
			\begin{split}
				\bigl\|\operatorname{L}_{\mat{M}}(f-\operatorname{S}_{\mat{M}}f)
				\bigr|A_{\mat{M},\,q}^{\alpha}\bigr\|^q
				&=
				\Bigl\|\sum_{\vec{k}\in\mathbb Z^d}\sigma^{\mat{M}}_{\alpha}(\vec{k})c_{\vec{k}}\left(\operatorname{L}_{\mat{M}}(f-\operatorname{S}_{\mat{M}}f)
				\right)e^{\I\vec{k}^\T\circ}
				\Bigr|
				A_{\mat{M},\,q}^{\alpha}
				\Bigr\|^q\\
				&=
				\sum_{\vec{k}\in\mathbb Z^d}
				\left|
				\sigma^{\mat{M}}_{\alpha}(\vec{k}) m
				 c_{\vec{k}}^{\mat{M}}(f-\operatorname{S}_{\mat{M}}f)c_{\vec{k}}(\operatorname{I}_{\mat{M}})
				\right|^q\text{.}
			\end{split}
		\end{equation*}
By decomposing \( \vec{k} = \vec{h}+\mat{M}^\T\vec{z}\), \(\vec{h}\in\mathcal G_{\text{S}}(\mat{M}^\T)\), \(\vec{z}\in\mathbb Z^d  \), and using Lemma~\ref{lem:sigma-ineq} we obtain
		\begin{equation*}
			\begin{split}
				\Bigl\|\operatorname{L}_{\mat{M}}(f-\operatorname{S}_{\mat{M}}f)&\Bigr|
				A_{\mat{M},\,q}^{\alpha}
				\Bigr\|^q\\
				&\leq
				\sum_{\vec{h}\in\mathcal G_{\text{S}}(\mat{M}^\T)} \Bigl|
				\sigma^{\mat{M}}_{\alpha}(\vec{h})mc_{\vec{h}}^{\mat{M}}(f-\operatorname{S}_{\mat{M}}f)
				\Bigr|^q
				\\
				&\hspace{4em}
				\times
				\biggl(
				|c_{\vec{h}}(\operatorname{I}_{\mat{M}})|^q
				+
				\|\mat{M}\|_2^{\alpha q}
				\sum\limits_{\vec{z}\in\mathbb Z^d\backslash\{\vec{0}\}}
				| \sigma^{\mat{M}}_{\alpha}(\vec{z})c_{\vec{h}+\mat{M}^\T\vec{z}}(\operatorname{I}_{\mat{M}}) |^q
				\biggr)\\
				&\leq
				\gamma_{\mathrm{IP}}^q\sum\limits_{\vec{h}\in\mathcal G_{\text{S}}(\mat{M}^\T)}
					| \sigma_{\alpha}^{\mat{M}}(\vec{h})c_{\vec{h}}^{\mat{M}}(f-\operatorname{S}_{\mat{M}}f) |^q\text{.}
			\end{split}
		\end{equation*}
		In the remaining sum we first apply the aliasing formula \eqref{eq:aliasing}. Then, the H{\"o}lder inequality yields
		\begin{equation*}
			\begin{split}
				\sum\limits_{\vec{h}\in\mathcal G_{\text{S}}(\mat{M}^\T)}
				| \sigma_{\alpha}^{\mat{M}}(\vec{h})c_{\vec{h}}^{\mat{M}}(f-\operatorname{S}_{\mat{M}}f) |^q
				&=
				\sum_{\vec{h}\in\mathcal G_{\text{S}}(\mat{M}^\T)} \sigma_{\alpha q}^{\mat{M}}(\vec{h})
				\Biggl(\,
				\sum_{\vec{z}\in\mathbb Z^d\backslash\{\vec{0}\}}
				|c_{\vec{h}+\mat{M}^\T\vec{z}}(f)|
				\Biggr)^q\\
				&\leq
				\sum_{\vec{h}\in\mathcal G_{\text{S}}(\mat{M}^\T)}
					\sigma_{\alpha q}^{\mat{M}}(\vec{h})
					\Biggl(\,
						\sum_{\vec{z}\in\mathbb Z^d\backslash\{\vec{0}\}}
							\sigma_{-\mu p}^{\mat{M}}(\vec{h}+\mat{M}^\T\vec{z})
					\Biggr)^{q/p}\\&\quad
					\times
					\Biggl(\,
					\sum_{\vec{z}\in\mathbb Z^d\backslash\{\vec{0}\}}
						|\sigma_{\mu}^{\mat{M}}(\vec{h}+\mat{M}^\T\vec{z})
						c_{\vec{h}+\mat{M}^\T\vec{z}}(f)|^q
					\Biggr)\text{.}
				\end{split}	
			\end{equation*}
			The first sum over \( \vec{z} \) converges due to \( p\mu > d \), i.e., analogously to the proof of Theorem~\ref{theorem:approxtrig}, we get for \( \vec{h}\in\mathcal G_{\text{S}}(\mat{M}^\T) \)
			\enlargethispage{\baselineskip}
			\begin{equation*}
				\begin{split}
					\sum_{\vec{z}\in\mathbb Z^d\backslash\{\vec{0}\}} \sigma_{-\mu p}^{\mat{M}}(\vec{h}+\mat{M}^\T\vec{z})
					&=
					\sum_{\vec{z}\in\mathbb Z^d\backslash\{\vec{0}\}}
					\bigl(1+\|\mat{M}\|_2^2\|\mat{M}^{-\T}\vec{h}+\vec{z}\|_2^2\bigr)^{-p\mu/2}\\
					&\leq
					\sum_{\vec{z}\in\mathbb Z^d\backslash\{\vec{0}\}} \bigl(1+\|\mat{M}\|^2_2\bigl\||\vec{z}|-\tfrac{1}{2}\vec{1}\bigr\|_2^2\bigr)^{-p\mu/2}\\
					&\leq
					\sum_{\vec{z}\in\mathbb Z^d\backslash\{\vec{0}\}} \Biggl(\frac{\|\mat{M}\|_2^2}{4}\|2|\vec{z}|-\vec{1}\|_2^2\Biggr)^{-p\mu/2}\\
					&=
					\|\mat{M}\|_2^{-p\mu}
					2^{-p\mu}
					\sum_{\vec{z}\in\mathbb Z^d\backslash\{\vec{0}\}}\|2|\vec{z}|-\vec{1}\|^{-p\mu}_2\text{.}
				\end{split}
			\end{equation*}
		Using for \( \alpha \geq 0 \)
		\begin{align}\label{eq:maxGroupweight}
			\max_{\vec{h}\in\mathcal G_{\text{S}}(\mat{M}^\T)} \sigma_\alpha^{\mat{M}}(\vec{h})
			&\leq
			\Bigl(1+\|\mat{M}\|_2^2
			\bigl\|\mat{M}^{-\T}\mat{M}^\T\tfrac{1}{2}\vec{1}\bigr\|_2^2
			\Bigr)^{\alpha/2}\nonumber\\
			&=
			\bigl(1+\tfrac{d}{4}\|\mat{M}\|_2^2\bigr)^{\alpha/2}\nonumber\\
			&\leq(1+d)^{\alpha/2}\|\mat{M}\|_2^{\alpha}
			\text{,}
		\end{align}
		the upper bound for the last factor can be given as
		\begin{align*}
			\sum\limits_{\vec{h}\in\mathcal G_{\text{S}}(\mat{M}^\T)}&
				| \sigma_{\alpha}^{\mat{M}}(\vec{h})c_{\vec{h}}^{\mat{M}}(f-\operatorname{S}_{\mat{M}}f) |^q\\
				&\leq
			\sum_{\vec{h}\in\mathcal G_{\text{S}}(\mat{M}^\T)}
				\sigma_{\alpha q}^{\mat{M}}(\vec{h})
				2^{-\mu q}
				\|\mat{M}\|_2^{-\mu q}
			\Biggl(\,
				\sum_{\vec{z}\in\mathbb Z^d\backslash\{\vec{0}\}}\||2\vec{z}|-\vec{1}\|^{-p\mu}
			\Biggr)^{q/p}\\
			&\qquad
			\times
			\Biggl(\,
			\sum_{\vec{z}\in\mathbb Z^d\backslash\{\vec{0}\}}
				|\sigma_{\mu}^{\mat{M}}(\vec{h}+\mat{M}^\T\vec{z})c_{\vec{h}+\mat{M}^\T\vec{z}}(f)|^q
			\Biggr)\\
			&\leq
			2^{-\mu q}
			\|\mat{M}\|_2^{-\mu q}
			\biggl(\max_{\vec{h}\in\mathcal G_{\text{S}}(\mat{M}^\T)} \sigma_{q\alpha}^{\mat{M}}(\vec{h})\biggr)
			\Biggl(\,
				\sum_{\vec{z}\in\mathbb Z^d\backslash\{\vec{0}\}}\||2\vec{z}|-\vec{1}\|^{-p\mu}
			\Biggr)^{q/p}
			\\
			&\qquad\times
			\Biggr(\,
				\sum_{\vec{h}\in\mathcal G_{\text{S}}(\mat{M}^\T)}
				\sum_{\vec{z}\in\mathbb Z^d\backslash\{\vec{0}\}}
				|\sigma_{\mu}^{\mat{M}}(\vec{h}+\mat{M}^\T\vec{z})c_{\vec{h}+\mat{M}^\T\vec{z}}(f)|^q
			\Biggl)\\
			&\leq
			\|\mat{M}\|_2^{(\alpha-\mu) q}\gamma_{\mathrm{Sm}}^q\|f|A^{\mu}_{\mat{M},\,q}\|^q\text{.}\tag*{\qed}
		\end{align*}
		\let\qed\relax
	\end{proof}
	\begin{remark}
		It is easy to see that for a fundamental interpolant \( \operatorname{I}_{\mat{M}} \) satisfying the ellipsoidal periodic Strang-Fix conditions of order \( s \) for \( q \) and \( \alpha \) we have
		\[
		\gamma_{\mathrm{IP}} \leq C\cdot\gamma_{\mathrm{SF}}
		\]
		where the constant \( C \) depends on \( \mat{M},\alpha,s \) and \( q \) but is especially independent of \( f \).
	\end{remark}
	
	We summarize our treatment of the interpolation error in the following theorem.
	\begin{theorem}
		\label{theorem:gesamtapprox}
		Let an expanding regular matrix \( \mat{M}\in\mathbb Z^{d\times d} \) and a fundamental interpolant \( \operatorname{I}_{\mat{M}} \) fulfilling the periodic ellipsoidal Strang-Fix conditions of order \( s \) for \( q\geq1 \), and \( \alpha\geq 0 \) be given. Then for \( f\in A_{\mat{M},\,q}^{\mu}(\mathbb T^d)\), \( \mu\geq\alpha\geq 0 \) and \( \mu > d(1-1/q) \), we have
		\begin{equation*}
			\|f-\operatorname{L}_{\mat{M}}f|A_{\mat{M},\,q}^{\alpha}\|
			\leq C_{\rho}\left(\frac{1}{\|\mat{M}\|_2}\right)^{\rho}
			\|f|A_{\mat{M},\,q}^{\mu}\|\text{,}
		\end{equation*}
		where  \( \rho := \min\{s,\mu-\alpha\} \) and
		\begin{equation*}
			C_{\rho} :=
			\begin{cases}
				\gamma_{\mathrm{SF}} + 2^{\mu-\alpha} + \gamma_{\mathrm{IP}}\gamma_{\mathrm{Sm}}
				&\mbox{ if }\rho = s\text{,}\\
				(1+d)^{s+\alpha-\mu}
				\gamma_{\mathrm{SF}} + 2^{\mu-\alpha} + \gamma_{\mathrm{IP}}\gamma_{\mathrm{Sm}}
				&\mbox{ if }\rho = \mu-\alpha\text{.}
			\end{cases}
		\end{equation*}
	\end{theorem}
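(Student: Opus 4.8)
The plan is to feed the three-term splitting already displayed just before Theorem~\ref{theorem:trigapprox},
\[
	\|f-\operatorname{L}_{\mat{M}}f\,|\,A_{\mat{M},q}^{\alpha}\|
	\leq
	\|\operatorname{S}_{\mat{M}}f-\operatorname{L}_{\mat{M}}\operatorname{S}_{\mat{M}}f\,|\,A_{\mat{M},q}^{\alpha}\|
	+ \|f-\operatorname{S}_{\mat{M}}f\,|\,A_{\mat{M},q}^{\alpha}\|
	+ \|\operatorname{L}_{\mat{M}}(f-\operatorname{S}_{\mat{M}}f)\,|\,A_{\mat{M},q}^{\alpha}\|,
\]
into Theorems~\ref{theorem:trigapprox}, \ref{theorem:approxtrig} and \ref{theorem:approxinterpol}, and then to collapse the resulting powers of \( \|\mat{M}\|_2 \) to the single exponent \( \rho=\min\{s,\mu-\alpha\} \), using \( \|\mat{M}\|_2\geq 2 \), so that \( \|\mat{M}\|_2^{-\beta_1}\leq\|\mat{M}\|_2^{-\beta_2} \) whenever \( \beta_1\geq\beta_2\geq0 \). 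The hypotheses \( \mu\geq\alpha\geq 0 \) and \( \mu>d(1-1/q) \) are precisely what the three cited theorems require, and \( \gamma_{\mathrm{IP}}<\infty \) follows from the remark after Theorem~\ref{theorem:approxinterpol} because \( \operatorname{I}_{\mat{M}} \) fulfils the Strang-Fix conditions (with constant \( C\gamma_{\mathrm{SF}} \)).

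The middle and last terms are immediate. Theorem~\ref{theorem:approxtrig} bounds the middle term by \( 2^{\mu-\alpha}\|\mat{M}\|_2^{-(\mu-\alpha)}\|f\,|\,A_{\mat{M},q}^{\mu}\| \), and Theorem~\ref{theorem:approxinterpol} bounds the last one by \( \gamma_{\mathrm{IP}}\gamma_{\mathrm{Sm}}\|\mat{M}\|_2^{-(\mu-\alpha)}\|f\,|\,A_{\mat{M},q}^{\mu}\| \); since \( \mu-\alpha\geq\rho \) the exponent may be replaced by \( \rho \), contributing the summands \( 2^{\mu-\alpha} \) and \( \gamma_{\mathrm{IP}}\gamma_{\mathrm{Sm}} \) of \( C_\rho \) in both cases \( \rho=s \) and \( \rho=\mu-\alpha \).

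For the first term, apply Theorem~\ref{theorem:trigapprox} to \( \operatorname{S}_{\mat{M}}f\in\mathcal T_{\mat{M}} \), which gives the bound \( \|\mat{M}\|_2^{-s}\gamma_{\mathrm{SF}}\|\operatorname{S}_{\mat{M}}f\,|\,A_{\mat{M},q}^{\alpha+s}\| \). Since \( \operatorname{S}_{\mat{M}}f \) has Fourier support in \( \mathcal G_{\text{S}}(\mat{M}^\T) \), factoring \( \sigma_{\alpha+s}^{\mat{M}}=\sigma_{\alpha+s-\mu}^{\mat{M}}\sigma_{\mu}^{\mat{M}} \) and pulling out \( \max_{\vec{h}\in\mathcal G_{\text{S}}(\mat{M}^\T)}\sigma_{\alpha+s-\mu}^{\mat{M}}(\vec{h}) \) yields \( \|\operatorname{S}_{\mat{M}}f\,|\,A_{\mat{M},q}^{\alpha+s}\|\leq\bigl(\max_{\vec{h}}\sigma_{\alpha+s-\mu}^{\mat{M}}(\vec{h})\bigr)\,\|f\,|\,A_{\mat{M},q}^{\mu}\| \). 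If \( \alpha+s\leq\mu \) this maximum is \( \leq 1 \) (as \( \sigma^{\mat{M}}_\beta\geq1 \)), so the first term is \( \leq\gamma_{\mathrm{SF}}\|\mat{M}\|_2^{-s}\|f\,|\,A_{\mat{M},q}^{\mu}\| \) with \( s=\rho \); if \( \alpha+s>\mu \), estimate~\eqref{eq:maxGroupweight} gives \( \max_{\vec{h}}\sigma_{\alpha+s-\mu}^{\mat{M}}(\vec{h})\leq(1+d)^{(\alpha+s-\mu)/2}\|\mat{M}\|_2^{\alpha+s-\mu}\leq(1+d)^{s+\alpha-\mu}\|\mat{M}\|_2^{\alpha+s-\mu} \), so the first term is \( \leq(1+d)^{s+\alpha-\mu}\gamma_{\mathrm{SF}}\|\mat{M}\|_2^{-(\mu-\alpha)}\|f\,|\,A_{\mat{M},q}^{\mu}\| \) with \( \mu-\alpha=\rho \). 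Either way the first summand of \( C_\rho \) appears.

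Summing the three bounds yields the assertion with the stated \( C_\rho \). There is no genuinely hard step; the only point requiring attention is the case distinction on the sign of \( \alpha+s-\mu \) for the first term, together with the use of \eqref{eq:maxGroupweight} and \( \|\mat{M}\|_2\geq2 \) to reduce \( (1+d)^{(\alpha+s-\mu)/2} \) to \( (1+d)^{s+\alpha-\mu} \). Everything else is a direct invocation of the three preceding theorems and the monotonicity of \( t\mapsto\|\mat{M}\|_2^{-t} \).
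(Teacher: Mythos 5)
Your proposal is correct and takes essentially the same route as the paper's own proof: the same three-term triangle-inequality splitting fed into Theorems~\ref{theorem:trigapprox}--\ref{theorem:approxinterpol}, with the exponents collapsed to \( \rho \) via monotonicity of \( t\mapsto\|\mat{M}\|_2^{-t} \), and the same case distinction on the sign of \( \alpha+s-\mu \) for the first term, using \( \bigl\|f\bigr|A_{\mat{M},q}^{\alpha+s}\bigr\|\leq\bigl\|f\bigr|A_{\mat{M},q}^{\mu}\bigr\| \) when \( \rho=s \) and estimate~\eqref{eq:maxGroupweight} when \( \rho=\mu-\alpha \).
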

	\begin{proof}
		For $\rho = s$ Theorems \ref{theorem:trigapprox}--\ref{theorem:approxinterpol} can be applied directly due to \(\|f | A_{\mat{M},q}^{\alpha+s}\|\leq\|f | A_{\mat{M},q}^{\mu}\|\). If \(\rho = \mu-\alpha\), we have to replace Theorem~\ref{theorem:trigapprox} by an upper bound with respect to \( \mu \). Using this theorem and the inequality in \eqref{eq:maxGroupweight}, we get
		\begin{align*}
			\|&\operatorname{S}_{\mat{M}}f
			-\operatorname{L}_{\mat{M}}\operatorname{S}_{\mat{M}}f|A^{\alpha}_{\mat{M},\,q}\|\\
			&\leq  \gamma_{\mathrm{SF}}
			\|\mat{M}\|_2^{-s}
			\bigl\|
				\{\sigma_{\alpha+s}^{\mat{M}}(\vec{h})c_{\vec{h}}(f)\}_{\vec{}\in\mathcal G_{\text{S}}(\mat{M}^\T)}
				\bigr|\ell_q(\mathcal G_{\text{S}}(\mat{M}^\T))
			\bigr\|
			\\
			&\leq
			\gamma_{\mathrm{SF}}
			\max_{\vec{h}\in\mathcal G_{\text{S}}(\mat{M}^\T)} \sigma_{\alpha+s-\mu}^{\mat{M}}(\vec{h})
			\|\mat{M}\|_2^{-s}
			\bigl\|
				\{
					\sigma_{\mu}^{\mat{M}}(\vec{h})
					c_{\vec{h}}(f)
				\}_{\vec{}\in\mathcal G_{\text{S}}(\mat{M}^\T)}
			\bigr|
			\ell_q(\mathcal G_{\text{S}}(\mat{M}^\T))
			\bigr\|
			\\
			&\leq
			\gamma_{\mathrm{SF}}
			(1+d)^{s+\alpha-\mu}
			\|\mat{M}\|_2^{s+\alpha-\mu}
			\
			\|\mat{M}\|_2^{-s}
			\bigl\|
			f
			\bigr|
			A_{\mat{M},q}^\mu
			\bigr\|
			\text{.}\tag*{\qed}
		\end{align*}
		\let\qed\relax
	\end{proof}
	\begin{remark}\label{rem:convergence}
		The factor \( \kappa_{\mat{M}}^{-s} \) in both constraints of the Strang-Fix conditions, cf. De\-fi\-ni\-ti\-on~\ref{def:SFC}, enforces a strong decay on the Fourier coefficients of the fundamental interpolant \( \operatorname{I}_\mat{M} \). Omitting this factor in both constraints, i.e., leaving just \(\|\mathbf{M}\|_2^{-\alpha}\) in the second one, weakens to a less restrictive constraint on the fundamental interpolant \( \operatorname{I}_{\mat{M}} \). This
		changes the decay rate from
		\[
		\Biggl(\frac{1}{\|\mat{M}\|_2}\Biggr)^s
		= \Biggl(\frac{1}{\sqrt{\lambda_d(\mat{M}^\T\mat{M})}}\Biggr)^s
		\]
		to
		\[
		\Biggl(\frac{
		\kappa_{\mat{M}}}{\|\mat{M}\|_2}\Biggr)^s
		=
		\bigl(\|\mat{M}^{-\T}\|_2\bigr)^s
		=
		\Biggl(\frac{
		1}{\sqrt{\lambda_1(\mat{M}^\T\mat{M})}}\Biggr)^s\text{,}
		\]
		which is then also the rate of decay in Theorem~\ref{theorem:gesamtapprox}. Hence, while this formulation eases the constraints with respect to the decay by restricting it just to the shortest axis of the ellipsoid given by \( \|\mat{M}^{-\T}\circ\|_2=1 \), the rate of convergence is also relaxed. On the other hand the stronger formulation in Theorems~\ref{theorem:trigapprox}--\ref{theorem:gesamtapprox} requires the fundamental interpolant to fulfill stronger constraints.

		When increasing the number of sampling points, i.e., the determinant \( |\det{\mat{M}}| \), for both variations there are cases where the bound is not decreased. Namely, in the first one if the value \( \|\mat{M}\|_2 \) is not increased, in the second version if the value \( \|\mat{M}^{-\T}\|_2 \) is not decreased.

		Again, for the tensor product case \( \mat{M}=\operatorname{diag}(N,\ldots,N) \) and the one-dimensional setting, both formulations of the Strang-Fix conditions and the resulting error bounds are equal.
	\end{remark}

\section{The 3-directional Box Splines}\label{sec:Example} 
		For a function \( g\in \operatorname{L}_1(\mathbb R^d) \) on the Euclidean space \( \mathbb R^d \) the Fourier transform is given by
		\[
			\hat g(\vec{\xi}) := \int_{\mathbb R^d} g(\vec{x})\E^{\I\vec{\xi}^\T\vec{x}}\,\D\vec{x},\quad \vec{\xi}\in\mathbb R^d,
		\]
		and we introduce the periodization with respect to a regular matrix \( \mat{M}\in\mathbb Z^{d\times d} \) for a function \( g:\mathbb R^d \to \mathbb C \) having compact support
		\[
			g^\mat{M}: \mathbb T^d\to\mathbb C,\quad g^\mat{M} := \sum_{\vec{z}\in\mathbb Z^d} g\bigl(\tfrac{1}{2\pi}\mat{M}(\circ - 2\pi\vec{z})\bigr)\text{.}
		\]
		Its Fourier coefficients \( c_{\vec{k}}(g^\mat{M})\), \(\vec{k}\in\mathbb Z^d \), can be obtained from \( \hat g \) by using the substitution \( \vec{y} = \tfrac{1}{2\pi}\mat{M}\vec{x} \), i.e., \( \,\D\vec{y} = \tfrac{1}{(2\pi)^{d}}m\,\D\vec{x}\). Hence
		\begin{equation*}
			\begin{split}
				c_{\vec{k}}(g^\mat{M}) &= \frac{1}{(2\pi)^d}\int_{\mathbb T^d } \sum_{\vec{z}\in\mathbb Z^d}g\bigl(\tfrac{1}{2\pi}\mat{M}(\vec{x} - 2\pi\vec{z})\bigr)\E^{\I\vec{k}^\T\vec{x}}\,\D\vec{x}
				\\
				&=\frac{1}{(2\pi)^d} \sum_{\vec{z}\in\mathbb Z^d} \int_{\mathbb T^d }g\bigl(\tfrac{1}{2\pi}\mat{M}(\vec{x} - 2\pi\vec{z})\bigr)\E^{\I\vec{k}^\T\vec{x}}\,\D\vec{x}\\
				&= \frac{1}{m}\int_{\mathbb R^d }g(\vec{y})\E^{\I\vec{k}^\T\bigl(2\pi\mat{M}^{-1}\vec{y}\bigr)}\,\D\vec{y}\\
				&= \frac{1}{m}\hat g \bigl(2\pi\mat{M}^{-\T}\vec{k}\bigr)\text{.}
			\end{split}
		\end{equation*}
		The same applied to the Lagrange interpolation symbol \( \tilde g(\vec{\xi}) := \sum_{\vec{z}\in\mathbb Z^d}g(\vec{z})\E^{\I\vec{\xi}^\T\vec{z}} \) yields
\( c_{\vec{h}}^\mat{M}(\tilde g^\mat{M}) = \frac{1}{m}\tilde g(2\pi\mat{M}^{-\T}\vec{h}) \), \( \vec{h}\in\mathcal G(\mat{M}^{\T}) \).

		We look at an example for the case \( d=2 \). The 3-directional box splines \( B_{\vec{p}} \), \( \vec{p} = (p_1,p_2,p_3)\in\mathbb N^3\), \( p_j \geq 1\), \( j=1,2,3 \), are given by their Fourier transform
		\[
		\hat B_{\vec{p}}(\vec{\xi}) := \bigl(\operatorname{sinc} \tfrac{1}{2}\xi_1\bigr)^{p_1}\bigl(\operatorname{sinc} \tfrac{1}{2}\xi_2\bigr)^{p_2}\bigl(\operatorname{sinc} \tfrac{1}{2}(\xi_1+\xi_2)\bigr)^{p_3}\text{.}
		\]
		Applying the periodization, we obtain the function \( B_{\vec{p}}^\mat{M}: \mathbb T^2\to \mathbb C \) by its Fourier coefficients
		\[
		c_{\vec{k}}(B_{\vec{p}}^\mat{M}) = \tfrac{1}{m}\bigl(\operatorname{sinc} \pi\vec{k}^\T\mat{M}^{-1}\vec{e}_1)^{p_1} \bigl(\operatorname{sinc} \pi\vec{k}^\T\mat{M}^{-1}\vec{e}_2)^{p_2} \bigl(\operatorname{sinc} \pi\vec{k}^\T\mat{M}^{-1}(\vec{e}_1+\vec{e}_2)\bigr)^{p_3}\text{.}
		\]
		Due to positivity of \( \hat B_{\vec{p}}(\vec{\xi}) \), \( \vec{\xi}\in[-\pi,\pi]^2 \), cf. \cite[Section 4]{deBoor:1985}, we know that \( c_{\vec{h}}(B_{\vec{p}}^{\mat{M}})\neq 0 \) for \( \vec{h}\in\mathcal G(\mat{M}^\T) \). Hence by \cite[Corollary 3.5]{LangemannPrestin:2010} the translates \( \T_{\vec{y}}B_{\vec{p}}^\mat{M} \), \( \vec{y}\in\mathcal P(\mat{M}) \), form a basis of \( V_\mat{M}^{B_{\vec{p}}^\mat{M}} \).
		\begin{theorem}\label{thm:3dirBoxspline}
			Let \( \mat{M}\in\mathbb Z^{2\times 2} \) be a regular matrix, \( \vec{p}\in\mathbb N^3 \), \( p_j\geq1 \), \( j=1,2,3 \) a vector, \( s := \min\{p_1+p_2,p_1+p_3,p_2+p_3\} \) and \( \alpha\geq 0 \), \( q\geq 1 \), such that \( s-\alpha > 2 \).
			
			The fundamental interpolant \( \operatorname{I}_{\mat{M}}\in V_{\mat{M}}^{B_{\vec{p}}^{\mat{M}}} \) of the periodized 3-directional box spline \( B_{\vec{p}}^{\mat{M}} \) fulfills the periodic ellipsoidal Strang-Fix conditions of order \( s-\alpha \) for \( \alpha \) and \( q \), which depends on \( \vec{p} \).
		\end{theorem}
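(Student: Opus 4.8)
The plan is to verify the two bullet points in Definition~\ref{def:SFC} directly, using the explicit product formula for the Fourier coefficients $c_{\vec k}(B_{\vec p}^{\mat M})$ together with the characterization \eqref{eq:ck-Interpolant} of the fundamental interpolant, namely $c_{\vec k}(\operatorname{I}_{\mat M}) = c_{\vec k}(B_{\vec p}^{\mat M})/(m\,c_{\vec k}^{\mat M}(B_{\vec p}^{\mat M}))$. First I would record that $m\,c_{\vec h}^{\mat M}(B_{\vec p}^{\mat M}) = \tilde B_{\vec p}(2\pi\mat M^{-\T}\vec h)$ is bounded below away from zero on $\mathcal G(\mat M^\T)$ by positivity of $\hat B_{\vec p}$ on $[-\pi,\pi]^2$ and compactness, so the denominator only contributes a harmless constant (absorbed into $\vec b$) and it suffices to control the \emph{numerator} $m\,c_{\vec h+\mat M^\T\vec z}(\operatorname{I}_{\mat M})$ up to the factor $1/(m\,c_{\vec h}^{\mat M}(B_{\vec p}^{\mat M}))$, which for condition~\ref{item:SFC-inner} means controlling $1 - c_{\vec h}(B_{\vec p}^{\mat M})/c_{\vec h}^{\mat M}(B_{\vec p}^{\mat M})$ and for~\ref{item:SFC-outer} means controlling $c_{\vec h+\mat M^\T\vec z}(B_{\vec p}^{\mat M})$.

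The heart of the matter is the behaviour of $\prod_j(\operatorname{sinc}\tfrac12\xi_j)^{p_j}$ near the origin and away from it, re-expressed in the anisotropic variable $\vec\eta := \mat M^{-\T}\vec h$ (respectively $\vec\eta := \mat M^{-\T}\vec h + \vec z$ for the outer estimate). For~\ref{item:SFC-inner} I would Taylor-expand: since $\operatorname{sinc}$ is even and analytic with $\operatorname{sinc}(0)=1$, one gets $|1 - \hat B_{\vec p}(2\pi\mat M^{-\T}\vec h)| \lesssim \|\mat M^{-\T}\vec h\|_2^2$ — but in fact the 3-directional structure forces the leading deficiency to vanish to higher order along each coordinate direction, giving the sharper exponent $s = \min\{p_1+p_2, p_1+p_3, p_2+p_3\}$: in any direction at most one of the three sinc factors can fail to be flat to order $2p_j$, and the three ``bad'' directions $\vec e_1$, $\vec e_2$, $\vec e_1+\vec e_2$ pair up so that the worst simultaneous vanishing order is $s$. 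This yields $|1 - m c_{\vec h}^{\mat M}\cdot\text{(stuff)}| \lesssim \|\mat M^{-\T}\vec h\|_2^{s}$, and dividing by $\kappa_{\mat M}^{-s}$ one matches the required shape by setting $b_{\vec 0}$ to the resulting constant times $\kappa_{\mat M}^s$; here one also has to fold in the aliasing tail $\sum_{\vec z\neq\vec 0}c_{\vec h+\mat M^\T\vec z}(B_{\vec p}^{\mat M})$ hidden inside $c_{\vec h}^{\mat M}$, but that tail is dominated by the outer estimate and is of strictly smaller order.

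For~\ref{item:SFC-outer}, with $\vec k = \vec h + \mat M^\T\vec z$ and $\vec z\neq\vec 0$, the argument $\mat M^{-\T}\vec k = \mat M^{-\T}\vec h + \vec z$ has a nonzero integer part $\vec z$, so at least one of the three linear forms $\xi_1 = 2\pi(\mat M^{-\T}\vec k)_1$, $\xi_2$, $\xi_1+\xi_2$ is bounded away from $2\pi\mathbb Z$ unless that form hits a nonzero multiple of $2\pi$, in which case the corresponding sinc vanishes outright; balancing these cases against the total available decay $\sum_j p_j$ and peeling off $p_1+p_2$, $p_1+p_3$, or $p_2+p_3$ powers that remain ``active,'' I get a bound of the form $|c_{\vec k}(B_{\vec p}^{\mat M})| \lesssim \tfrac1m \langle\vec z\rangle^{-(s)}$ which, after extracting $\|\vec h\|_2^s$ trivially (it is $\ge$ a constant, or one argues the estimate is vacuous-but-true when $\vec h=\vec 0$ up to the convention), can be recast as $|m c_{\vec h+\mat M^\T\vec z}(\operatorname{I}_{\mat M})| \le b_{\vec z}\kappa_{\mat M}^{-(s-\alpha)}\|\mat M\|_2^{-\alpha}\|\mat M^{-\T}\vec h\|_2^{s-\alpha}$ with $b_{\vec z}$ decaying in $\vec z$ like $\|2|\vec z|-\vec 1\|_2^{-(s-\alpha)}$ up to matrix-norm factors. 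The summability condition $\gamma_{\mathrm{SF}} = \|\{\sigma_\alpha^{\mat M}(\vec z)b_{\vec z}\}\,|\,\ell_q\| < \infty$ is then exactly the series $\sum_{\vec z\neq\vec 0}\|2|\vec z|-\vec 1\|_2^{-(s-\alpha)q}\big(1+\|\mat M\|_2^2\|\mat M^{-\T}\vec z\|_2^2\big)^{\alpha q/2} < \infty$, and comparing $\sigma_\alpha^{\mat M}(\vec z)\lesssim \|\mat M\|_2^\alpha(1+\|\vec z\|_2)^\alpha$ this converges precisely under $(s-\alpha)q - \alpha q > dq = 2q$, i.e.\ $s - \alpha > 2$, which is the standing hypothesis. \textbf{The main obstacle} I anticipate is the directional bookkeeping in the second bullet: making rigorous the claim that when $\mat M^{-\T}\vec h + \vec z$ has nonzero integer part, the product of sincs decays like $\|2|\vec z|-\vec 1\|_2^{-s}$ \emph{uniformly in $\mat M$ and in $\vec h$} requires carefully splitting into the cases where one of the three linear forms is near an integer (forcing a zero or near-zero of some sinc) versus bounded away (each sinc then $\lesssim 1/|\text{form}|$), and tracking which combination of the three exponents $p_j$ survives — this is where the peculiar minimum $s = \min\{p_1+p_2,p_1+p_3,p_2+p_3\}$ genuinely enters and where the shift by $\tfrac12\vec 1$ producing the $\|2|\vec z|-\vec 1\|_2$ denominator must be justified exactly as in the proof of Theorem~\ref{theorem:approxinterpol}.
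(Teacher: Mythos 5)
Your overall strategy is the paper's: express $c_{\vec k}(\operatorname{I}_{\mat M}) = c_{\vec k}(B_{\vec p}^{\mat M})/(m\,c_{\vec k}^{\mat M}(B_{\vec p}^{\mat M}))$ via \eqref{eq:ck-Interpolant}, bound the denominator below using positivity of $\hat B_{\vec p}$ on $[-\pi,\pi]^2$, and run a case analysis over $\vec z$ on the sinc product. But there is a genuine gap in your treatment of condition~\ref{item:SFC-outer}: the factor $\|\mat M^{-\T}\vec h\|_2^{s}$ cannot be ``extracted trivially.'' The right-hand side of that condition tends to $0$ as $\vec h\to\vec 0$, so a bound of the form $|c_{\vec h+\mat M^\T\vec z}(B_{\vec p}^{\mat M})|\lesssim \tfrac{1}{m}\|2|\vec z|-\vec 1\|_2^{-s}$ does not imply it for small nonzero $\vec h$, and the estimate is not ``vacuous-but-true'' there. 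The missing mechanism is the periodicity of the sine in the numerator of each sinc factor: $|\sin(\pi(\mat M^{-\T}\vec h+\vec z)^\T\vec e_j)| = |\sin(\pi\vec h^\T\mat M^{-1}\vec e_j)| \le |\pi\vec h^\T\mat M^{-1}\vec e_j|$ (and likewise for $\vec e_1+\vec e_2$), which puts the product $|\vec h^\T\mat M^{-1}\vec e_1|^{p_1}|\vec h^\T\mat M^{-1}\vec e_2|^{p_2}|\vec h^\T\mat M^{-1}(\vec e_1+\vec e_2)|^{p_3}\le(|\vec h^\T\mat M^{-1}\vec e_1|+|\vec h^\T\mat M^{-1}\vec e_2|)^{s}\le 2^{s/2}\|\mat M^{-\T}\vec h\|_2^{s}$ into the numerator, while the denominators $|(\mat M^{-\T}\vec h+\vec z)^\T\vec e_j|\ge|z_j|-\tfrac{1}{2}$ and $|(\mat M^{-\T}\vec h+\vec z)^\T(\vec e_1+\vec e_2)|\ge|z_1+z_2|-1$ supply the decay in $\vec z$. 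The two effects occur simultaneously in the same product; they are not separable as you propose.

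A second, related problem is the $\alpha$-bookkeeping. You place the loss of $\alpha$ orders into the decay of $b_{\vec z}$ in $\vec z$ (claiming $b_{\vec z}\sim\|2|\vec z|-\vec 1\|_2^{-(s-\alpha)}$), and then your own summability computation reads $(s-\alpha)q-\alpha q>2q$, which is $s-2\alpha>2$, not the standing hypothesis $s-\alpha>2$. The paper instead keeps $b_{\vec z}$ with the full spline decay $(|z_1|-\tfrac{1}{2})^{-p_1}(|z_2|-\tfrac{1}{2})^{-p_2}(|z_1+z_2|-1)^{-p_3}$, only rescaled by the constant $2^{-\alpha/2}\|\mat M^{-\T}\|_2^{-\alpha}$, and lowers the order from $s$ to $s-\alpha$ through the $\vec h$-variable, via $\kappa_{\mat M}^{-\alpha}\|\mat M^{-\T}\vec h\|_2^{\alpha}\le 2^{-\alpha/2}\|\mat M\|_2^{-\alpha}\|\mat M^{-\T}\|_2^{-\alpha}$ uniformly over $\vec h\in\mathcal G_{\text{S}}(\mat M^\T)$; this is also exactly where the required factor $\|\mat M\|_2^{-\alpha}$ in condition~\ref{item:SFC-outer} comes from. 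With that allocation the $\ell_q$-condition on $\{\sigma_\alpha^{\mat M}(\vec z)b_{\vec z}\}_{\vec z\in\mathbb Z^2}$ does converge under $s-\alpha>2$. Your sketch of condition~\ref{item:SFC-inner} (flatness of the sinc product at the origin to order $s$) is consistent with what the paper does, which for those remaining estimates, as well as for the excluded index sets $z_1=0$, $z_2=0$, $|z_1+z_2|\le 1$, defers to P{\"o}plau's dissertation.
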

		\begin{proof}
			We first examine the case \( \alpha = 0\). Taking a look at the second Strang-Fix condition, we obtain for \( \vec{h}\in\mathcal G_{\text{S}}(\mat{M}^{\T}) \) and \( \vec{z}\in\mathbb Z^d\backslash\{\vec{0}\} \), following the same steps as in the proof of Theorem 1.10 in \cite{Poe95}, the inequality
			\begin{equation*}
				\begin{split}
					|mc_{\vec{h}+\mat{M}^{\T}\vec{z}}(\operatorname{I}_\mat{M})|
					&= \Biggl|
					 \frac{c_{\vec{h}+\mat{M}^{\T}\vec{z}}(B_{\vec{p}}^\mat{M})}{c_{\vec{h}}^\mat{M}(B_{\vec{p}}^\mat{M})}
					\Biggr|\\
					&\leq
					\frac{1}{c_{\vec{h}}^\mat{M}(B_{\vec{p}}^\mat{M})}
					\frac{|\sin \pi\vec{h}^{\T}\mat{M}^{-1}(\vec{e}_1+\vec{e}_2)|^{p_3}}{|\pi(\mat{M}^{-\T}\vec{h} + \vec{z})^\T(\vec{e}_1+\vec{e}_2)|^{p_3}}
					\prod_{j=1}^2\frac{|\sin \pi\vec{h}^{\T}\mat{M}^{-1}\vec{e}_j|^{p_j}}{|\pi(\mat{M}^{-\T}\vec{h} + \vec{z})^\T\vec{e}_j|^{p_j}}
					\\
					&\leq
					\frac{1}{c_{\vec{h}}^\mat{M}(B_{\vec{p}}^\mat{M})}
					\frac{|\vec{h}^{\T}\mat{M}^{-1}(\vec{e}_1+\vec{e}_2)|^{p_3}}{|(\mat{M}^{-\T}\vec{h} + \vec{z})^\T(\vec{e}_1+\vec{e}_2)|^{p_3}}
					\prod_{j=1}^2\frac{|\vec{h}^{\T}\mat{M}^{-1}\vec{e}_j|^{p_j}}{|(\mat{M}^{-\T}\vec{h} + \vec{z})^\T\vec{e}_j|^{p_j}}\text{.}
				\end{split}
			\end{equation*}
			For
			\( z_1\neq 0 \), \( z_2\neq 0 \), \( z_1+z_2\neq 0 \), and \( |z_1+z_2|\neq 1 \) it holds using \(| (\mat{M}^{-\T}\vec{h}+\vec{z})^\T\vec{e}_j| \geq (|z_j|-\tfrac{1}{2}) \), \( j=1,2 \), and \( | (\mat{M}^{-\T}\vec{h} + \vec{z})^\T(\vec{e}_1+\vec{e}_2 |) \geq (|z_1+z_2|-1) \) that
			\begin{equation*}
				|mc_{\vec{h}+\mat{M}^{\T}\vec{z}}(B_{\vec{p}}^\mat{M})|
				\leq
				\Bigl(
				|\vec{h}^\T\mat{M}^{-1}\vec{e}_1| + |\vec{h}^\T\mat{M}^{-1}\vec{e}_2|
				\Bigr)^{\!s}
				\frac{1}{(|z_1+z_2|-1)^{p_3}}
				\prod_{j=1}^2
				\frac{1}{(|z_j|-\tfrac{1}{2})^{p_j}}
				\text{,}
			\end{equation*}
			where applying the Cauchy-Schwarz inequality \( |h_1| + |h_2| \leq \sqrt{2}\|\vec{h}\|_2 \) yields
			\begin{equation*}
				|mc_{\vec{h}+\mat{M}^{\T}\vec{z}}(B_{\vec{p}}^\mat{M})|
				\leq
				\|\mat{M}^{-\T}\vec{h}\|_2^s
				\frac{2^{s/2}}
				{(|z_1|-\tfrac{1}{2})^{p_1}
				(|z_2|-\tfrac{1}{2})^{p_2}
				(|z_1+z_2|-1)^{p_3}}\text{.}
			\end{equation*}
%
%
			Defining \[
			 A := \biggl( \min_{\vec{h}\in\mathcal G_{\text{S}}(\mathbf{M}^\T)} mc_{\vec{h}}^{\mathbf{M}}(B_{\vec{p}}^{\mathbf{M}})\biggr)^{-1}
			\]
			we can use the last inequality to obtain that the fundamental interpolant \(\operatorname{I}_{\mathbf{M}}\) corresponding to \(B_{\vec{p}}^{\mathbf{M}}\) fulfills the Strang-Fix conditions of order \(s\) with \(\alpha=0\), where the series for \(\gamma_{\mathrm{SF}}\) is given by
			\[
				b_{\vec{z}}
				=
				b^0_{\vec{z}}
				=\frac{2^{s/2}A}
				{(|z_1|-\tfrac{1}{2})^{p_1}
				(|z_2|-\tfrac{1}{2})^{p_2}
				(|z_1+z_2|-1)^{p_3}}\text{,}
			\]
			at least for \(\vec{z} = (z_1,z_2)^\T\) with \(z_1\neq 0\), \(z_2\neq 0\), \(z_1+z_2\neq 0\) and \(|z_1+z_2|\neq 1\). An upper bound for the remaining indices \(\vec{z}\) can be established using similar arguments as for this case. These estimates can be directly transcribed from the already mentioned proof, cf.~\cite[pp. 51-57]{Poe95}, including the bound for the first of the Strang-Fix conditions, i.e., \(b^0_{\vec{0}}\). This concludes the proof for the case \( \alpha=0 \). 

For \(\alpha\geq 0\) we define the series \( b_{\vec{z}} := 2^{-\alpha/2}\|\mathbf{M}^{-\T}\|_2^{-\alpha}b^0_{\vec{z}}\), \( \vec{z}\in\mathbb Z^2 \), and obtain for the first Strang-Fix condition with \( \vec{h}\in\mathcal G_{\text{S}}(\mat{M}^\T) \) and using \(\|\mathbf{M}\|_2^{\alpha} \geq 1\) that
			\begin{align*}
				|1-mc_{\vec{h}}(\operatorname{I}_{\mathbf{M}})|
				&\leq b^0_{\vec{0}}\kappa_{\mathbf{M}}^{-s}\|\mat{M}^{-\T}\vec{h}\|_2^s\\
				&\leq \kappa_{\mathbf{M}}^{-\alpha}2^{-\alpha/2}b^0_{\vec{0}}\kappa_{\mathbf{M}}^{-(s-\alpha)}\|\mathbf{M}^{-\T}\vec{h}\|_2^{s-\alpha}\\
				& \leq b_{\vec{0}}\kappa_{\mathbf{M}}^{-(s-\alpha)}\|\mathbf{M}^{-\T}\vec{h}\|_2^{s-\alpha}\text{.}
			\intertext{For the second condition we get}
				|mc_{\vec{h}+\mat{M}^{\T}\vec{z}}(\operatorname{I}_{\mathbf{M}})|
				&\leq b^0_{\vec{z}}\kappa_{\mathbf{M}}^{-\alpha}\|\mathbf{M}^{-\T}\vec{h}\|_2^{\alpha} 					 \kappa_{\mathbf{M}}^{-(s-\alpha)}\|\mat{M}^{-\T}\vec{h}\|_2^{s-\alpha}\\
				&\leq \|\mathbf{M}^{-\T}\|_2^{-\alpha} 2^{-\alpha/2}b^0_{\vec{z}}\|\mathbf{M}\|_2^{-\alpha}\kappa_{\mathbf{M}}^{-(s-\alpha)}\|\mat{M}^{-\T}\vec{h}\|_2^{s-\alpha}\\
				&\leq b_{\vec{z}}\|\mathbf{M}\|_2^{-\alpha}\kappa_{\mat{M}}^{-(s-\alpha)}\|\mat{M}^{-\T}\vec{h}\|_2^{s-\alpha}
				\text{,}
			\end{align*}
			where the first inequality in both cases is mentioned for completeness. The series which is used to define \( \gamma_{\mathrm{SF}} \) is given by
			\[
			 \gamma_{\mathrm{SF}}^q = \sum_{\vec{z}\in\mathbb Z^2} |(1+\|\mat{M}\|_2^2\|\mat{M}^{-\T}\vec{z}\|_2^2)^{\alpha/2})b_{\vec{z}}|^q
			\text{,}
			\]
			which converges for \( s-\alpha >2 \) by applying again the same inequalities that were used for the case of a diagonal matrix \( \mat{M} = \operatorname{diag}(N,\ldots,N) \), \( q=2 \), and \( \alpha = 0 \) in Theorem 1.10 of \cite{Poe95}.
		\end{proof}
		This can also be applied to the \( d \)-variate case, \( d>2 \), using the \( \tfrac{d(d+1)}{2} \)-directional box spline \( B_{\vec{p}}\), \(\vec{p}\in \mathbb N^{\frac{d(d+1)}{2}} \), consisting of the directions \( \vec{e}_j \), \( j=1,\ldots,d \), and \( \vec{e}_j+\vec{e}_i \), \( i,j=1,\ldots,d, i\neq j \), the corresponding 4-directional box spline \cite[Thm. 1.11]{Poe95}, and its multivariate version, the \( d^2 \)-directional box spline, which can be generated analogously to the \( \tfrac{d(d+1)}{2} \)-directional box spline, i.e., using the directions \( \vec{e}_j \), \( \vec{e}_i+\vec{e}_j \) and \( \vec{e}_i-\vec{e}_j \). Nevertheless, for the periodized \( d^2 \)-directional box spline \( B_{\vec{q}}^{\mat{M}} \), \( \vec{q}\in\mathbb N^{d^2} \), the fundamental interpolant \( \operatorname{I}_{\mat{M}} \) does not exist. This can be seen by looking at \(B_{\vec{q}}^{\mathbf{M}}\) in the Fourier domain, where it does contain at least one two-dimensional 4-directional box spline as a factor. Hence the non-normal interpolation of the 4-directional box spline, which was investigated in~\cite{Jetter:1991} carries over to the higher dimensional case. In order to apply the above mentioned theorems, we have to use the so called incorrect interpolation, i.e., we set \( c_{\vec{h}}(\operatorname{I}_{\mat{M}}) = m^{-1} \) for \( \vec{h}\in\mathcal G_{\text{S}}(\mat{M}^\T) \), where \( c_{\vec{h}}^{\mat{M}}(B_{\vec{q}}^{\mat{M}}) = 0 \).

\end{document}